\font\tengoth=eufm10 at 10pt
\font\sevengoth=eufm7 at 6pt
\newcommand{\g}{{\mathfrak g}}
\newcommand{\fh}{{\mathfrak h}}
\renewcommand{\:}{\colon}
\newcommand{\1}{\mathbf{1}}
\newcommand{\cA}{\mathcal{A}}
\newcommand{\cH}{\mathcal{H}}
\newcommand{\cS}{\mathcal{S}}
\newcommand{\eset}{\emptyset}
\newcommand{\derat}[1]{\frac{d}{dt} \hbox{\vrule width0.5pt
                height 5mm depth 3mm${{}\atop{{}\atop{\scriptstyle t=#1}}}$}}
\renewcommand{\phi}{\varphi}
\newcommand{\dd}{{\tt d}}
\newcommand{\subeq}{\subseteq}
\newcommand{\into}{\hookrightarrow}
\newcommand{\eps}{\varepsilon}
\newcommand{\N}{{\mathbb N}}
\newcommand{\R}{{\mathbb R}}
\newcommand{\C}{{\mathbb C}}
\renewcommand{\H}{{\mathbb H}}
\renewcommand{\hat}{\widehat}
\renewcommand{\tilde}{\widetilde}
\renewcommand{\L}{\mathop{\bf L{}}\nolimits}
\newcommand{\GL}{\mathop{{\rm GL}}\nolimits}
\newcommand{\U}{\mathop{\rm U{}}\nolimits}
\newcommand{\Ad}{\mathop{{\rm Ad}}\nolimits}
\newcommand{\Hom}{\mathop{{\rm Hom}}\nolimits}
\newcommand{\Aut}{\mathop{{\rm Aut}}\nolimits}
\newcommand{\End}{\mathop{{\rm End}}\nolimits}
\newcommand{\supp}{\mathop{{\rm supp}}\nolimits}
\newcommand{\Rarrow}{\Rightarrow}
\newcommand{\oline}{\overline}
\newcommand{\Spec}{{\rm Spec}}
\newcommand{\ssssarr}{\hbox to 15pt{\rightarrowfill}}
\newcommand{\sssarr}{\hbox to 20pt{\rightarrowfill}}
\newcommand{\ssarr}{\hbox to 30pt{\rightarrowfill}}
\newcommand{\sarr}{\hbox to 40pt{\rightarrowfill}}
\newcommand{\arr}{\hbox to 60pt{\rightarrowfill}}
\newcommand{\larr}{\hbox to 60pt{\leftarrowfill}}
\newcommand{\Arr}{\hbox to 80pt{\rightarrowfill}}
\def\theoremname{Theorem}
\def\propositionname{Proposition}
\def\corollaryname{Corollary}
\def\lemmaname{Lemma}
\def\remarkname{Remark}
\def\conjecturename{Conjecture} 
\def\definitionname{Definition}
\def\exercisename{Exercise}
\def\examplename{Example}
\def\examplesname{Examples}
\def\problemname{Problem}
\def\problemsname{Problems}
\def\proofname{Proof}
\def\satzname{Satz} 
\def\koroname{Korollar}
\def\folgname{Folgerung}
\def\bemerkname{Bemerkung}
\def\aufgname{Aufgabe}
\def\beisname{Beispiel}
\def\beissname{Beispiele}
\def\bewname{Beweis}
\def\@thmcounter#1{\noexpand\arabic{#1}}
\def\@thmcountersep{}
\def\@begintheorem#1#2{\it \trivlist \item[\hskip 
\labelsep{\bf #1\ #2.\quad}]}
\def\@opargbegintheorem#1#2#3{\it \trivlist
      \item[\hskip \labelsep{\bf #1\ #2.\quad{\rm #3}}]}
\newtheorem{theor}{\theoremname}[section]
\newtheorem{propo}[theor]{\propositionname}
\newtheorem{coro}[theor]{\corollaryname}
\newtheorem{lemm}[theor]{\lemmaname}
\newenvironment{thm}{\begin{theor}\it}{\end{theor}}
\newenvironment{prop}{\begin{propo}\it}{\end{propo}}
\newenvironment{cor}{\begin{coro}\it}{\end{coro}}
\newenvironment{lem}{\begin{lemm}\it}{\end{lemm}}
\newtheorem{rema}[theor]{\remarkname}
\newenvironment{rmk}{\begin{rema}\rm}{\end{rema}}
\newtheorem{stepnow}[theor]{}
\newtheorem{defin}[theor]{\definitionname} 
\newenvironment{dfn}{\begin{defin}\rm}{\end{defin}}
\newtheorem{exerc}[theor]{\exercisename}
\newtheorem{exa}[theor]{\examplename}
\newtheorem{exas}[theor]{\examplesname}
\newtheorem{conj}[theor]{\conjecturename}
\newtheorem{pro}[theor]{\problemname}
\newtheorem{prs}[theor]{\problemsname}
\newcommand{\qed}{{\unskip\nobreak\hfil\penalty50\hskip .001pt \hbox{}
          \nobreak\hfil
          \vrule height 1.2ex width 1.1ex depth -.1ex
           \parfillskip=0pt\finalhyphendemerits=0\medbreak}\rm}
\qed\end{trivlist}}
\newenvironment{proof}{\begin{trivlist}\item[\hskip%
\labelsep{\bf\proofname.\quad}]}%
{\hfill\qed\end{trivlist}}
\newenvironment{prf}{\begin{trivlist}\item[\hskip%
\labelsep{\bf\proofname.\quad}]}%
{\hfill\qed\end{trivlist}}
\newenvironment{Proof*}{\begin{trivlist}\item[\hskip%
\labelsep{\bf\proofname.\quad}]}%
{\end{trivlist}}
\qed\end{trivlist}}
\newenvironment{beweis*}{\begin{trivlist}\item[\hskip%
\labelsep{\bf\bewname.\quad}]}%
{\end{trivlist}}
\newtheorem{satzn}[theor]{\satzname}
\newtheorem{koro}[theor]{\koroname}
\newtheorem{folg}[theor]{\folgname}
\newtheorem{bem}[theor]{\bemerkname}
\newtheorem{aufg}[theor]{\aufgname}
\newtheorem{aufgn}[theor]{\aufgname}
\newtheorem{beis}[theor]{\beisname}
\newtheorem{beiss}[theor]{\beissname}
\renewcommand{\H}{\mathcal{H}}
\begin{document} 

\title{On an invariance property of the space of smooth vectors}
\author{Karl--Hermann Neeb
\begin{footnote}{
Department Mathematik,
FAU Erlangen-N\"urnberg,
Cauerstra\ss e 11, 91058 Erlangen, Deutschland,
\texttt{karl-hermann.neeb@math.uni-erlangen.de}
}
\end{footnote}
\and Hadi Salmasian\begin{footnote}
{
Department of Mathematics and Statistics,
University of Ottawa, 585 King Edward Ave., Ottawa, ON K1N 6N5,
Canada,
\texttt{hsalmasi@uottawa.ca}
}\end{footnote}
\and
Christoph Zellner
\begin{footnote}
{Department Mathematik,
FAU Erlangen-N\"urnberg,
Cauerstra\ss e 11, 91058 Erlangen, Deutschland,
\texttt{zellner@mi.uni-erlangen.de}
}\end{footnote}
}

\maketitle
\date 


\begin{abstract}  Let $(\pi, \cH)$ be a continuous unitary representation of the 
(infinite dimensional) Lie group $G$, and 
$\gamma \: \R \to \Aut(G)$ be a group homomorphism which defines a continuous action of $\R$ on $G$ by Lie group automorphisms. 
Let $\pi^\#(g,t) = \pi(g) U_t$ be a continuous unitary representation of the 
semidirect product group $G \rtimes_\gamma \R$ on $\H$. The first main theorem of the present 
note provides criteria 
for the invariance of the space $\cH^\infty$ of smooth vectors of $\pi$ under the operators 
$U_f = \int_\R f(t)U_t\, dt$ for $f \in L^1(\R)$, resp., $f \in \cS(\R)$. 
When $\g $ is complete and the actions of $\R$ on $G$ and $\g $ are continuous, 
we use the above theorem to show that, 
for suitably defined spectral subspaces 
$\g_\C(E)$, $E \subeq \R$, in the complexified Lie algebra $\g_\C$, 
and $\cH^\infty(F)$, $F\subeq \R$,
 for $U_t$ in  $\cH^\infty$, we have 
\[ \dd\pi(\g_\C(E)) \cH^\infty(F) \subeq \cH^\infty(E + F).\] 
{\em{MSC2010: 22E65, 22E45, 17B65}}. 
\end{abstract}

\section{Introduction} 

For a complex Lie algebra $\g$ with a root decomposition 
$\g = \fh \oplus \bigoplus_{\alpha \in \Delta} \g_\alpha$ and the corresponding $\fh$-weight 
spaces $V_\beta$ in a $\g$-module, one has the elementary relation 
\[ \g_\alpha.V_\beta \subeq V_{\beta + \alpha},\] 
which is of central importance in understanding the structure of the action of 
$\g$ on~$V$ (\cite{Hum72, Bou82}). 
The main point of the present note is to provide a generalization 
of this relation to unitary representations of infinite dimensional Lie groups. The results of this note are used in our forthcoming articles \cite{NS14} and \cite{MN14}.

To make our results as flexible as possible, we consider the following setting. 
Let $G$ be a locally convex Lie group with Lie algebra $\g$ 
and a smooth exponential map $\exp_G:\g\rightarrow G$ denoted by 
$e^x:=\exp_G(x)$ (see \cite{Ne06}). 
We denote the group of smooth automorphisms of $G$ by $\Aut(G)$.
We further consider a one-parameter group 
$
\gamma \: \R \to \Aut(G), t \mapsto \gamma_t
$ 
defining a continuous action of $\R$ on $G$. 
 Then the semidirect product 
$G \rtimes_\gamma \R$ is a topological group whose continuous unitary representations 
$(\pi^\#, \cH)$ can be written as $\pi^\#(g,t) = \pi(g) U_t$, 
where $(\pi,\cH)$ is a unitary representation of $G$ and $(U_t)_{t \in\R}$ a continuous 
unitary one-parameter group satisfying 
\[ U_t \pi(g) U_t^* = \pi(\gamma_t(g)) \quad \mbox{ for } \quad t \in \R, g \in G.\] 
For $f \in L^1(\R)$, we then obtain a bounded operator 
$U_f = \int_\R f(t) U_t\,  dt \in B(\cH)$. 
We call $v \in \cH$ {\it smooth} if the orbit map 
$\pi^v \: G \to \cH, g \mapsto \pi(g)v$ is smooth and write 
$\cH^\infty$ for the subspace of smooth vectors. Then 
\[ \dd\pi(x) v := \derat0 \pi(e^{tx})v
\quad\mbox{ for } \quad  v \in \cH^\infty, x \in \g,\] 
defines by complex linear extension a representation 
$\dd\pi \: \g_\C \to \End(\cH^\infty)$. 

Let $\Aut(\g)$ denote the group of continuous automorphisms of $\g$. Our first main result asserts that if $(\L(\gamma_t))_{t \in \R}$ is 
equicontinuous in $\Aut(\g)$, then $\cH^\infty$ is invariant under the operators 
$U_f$, $f \in L^1(\R)$. Under the weaker assumption that 
$(\L(\gamma_t))_{t \in \R}$ is polynomially bounded, we still have the invariance 
under $U_f$, $f \in \cS(\R)$, where $\cS(\R)$ denotes the space of Schwartz functions. The main point of this result is that it permits 
us to localize the $U$-spectrum within the space of smooth vectors because 
the $U$-spectrum of a vector of the form $U_f v$ is contained in $\supp(\hat f)$ 
(Theorem~\ref{smoothpropgen}). 

To turn this into an effective tool to analyze positive energy representations, 
i.e., representations where $\Spec(U)$ is bounded from below, we need to know 
how the $U$-spectrum of an element $v$ changes when we apply elements of $\g_\C$. 
This is clarified by Theorem~\ref{thm:specrel}, where we show that, when $\g$ is complete and the action of $\R$ on $\g$ is continuous, for 
suitably defined 
spectral subspaces 
$\g_\C(E)$, $E \subeq \R$, and 
$\cH^\infty(F):=\cH(F)\cap\cH^\infty$, $F\subeq \R$, corresponding to $U$ in $\cH$,  we have 
\[ \dd\pi(\g_\C(E)) \cH^\infty(F) \subeq \cH^\infty(E + F).\] 

Note that all this applies in particular to the special case 
where $\gamma_t(g) = e^{tx} g e^{-tx}$ for $x \in \g$, provided the one-parameter 
group $\Ad(e^{tx})$ is equicontinuous, resp., polynomially bounded. 
In this context the results of the present paper are used in 
the forthcoming articles \cite{NS14} and \cite{MN14}.

\section{The Invariance Theorem}

We prepare the  proof of Theorem~\ref{smoothpropgen} 
with the following lemma. 
For $U\subset G$ open and $h:U\rightarrow \C$ a smooth map, we define the derivative of $h$ along a left invariant vector field by
\[ L_xh:U\rightarrow \C, \quad L_xh(g):=\lim_{s\to 0}\frac1s\left(h(g e^{sx})-h(g)\right)\]
for $x\in\g, g\in U$.  We refer to \cite{Ne06} for the basic facts and definitions 
concerning calculus in locally convex spaces and the corresponding manifold and Lie group 
concepts (see also \cite{Ha82} and \cite{Ne01}).

\begin{lem}\label{lemestimates}
Let $K\in\N$, $W\subset G$ be open and $\Phi:W\rightarrow \g$ be a chart. Let $F:\R\times W\to \C$ satisfy the following properties:
\begin{itemize}
\item[\rm(i)] The map $F_t \: W \to \C, F_t(g) := F(t,g)$ is in $C^\infty(W,\C)$ for every fixed $t\in\R$.
\item[\rm(ii)] For every $g_\circ\in W$ and every $k\in\N_0= \N \cup \{0\}$
satisfying $k\leq K$, there exist an open $g_\circ$-neighborhood $U_{g_\circ,k}\subset W$ and an open $\mathbf 0$-neighborhood $V_{g_\circ,k}\subeq \g$ such that
\begin{equation}
\label{upbdLx}
\sup\left\{\,
|L_{x_1}\cdots L_{x_k} F_t(g)|\ :\ g\in U_{g_\circ,k},\,x_1,\ldots,x_k\in V_{g_\circ,k},\,t\in\R\,
\right\}<\infty. 
\end{equation}
\end{itemize}
Then, for every $g_\circ\in W$ and every $k\in\N_0$ with $k\leq K$, 
there exist an open $g_\circ$-neighborhood $\tilde U_{g_\circ,k}\subset W$ 
and an open $\mathbf 0$-neighborhood $\tilde V_{g_\circ,k}\subeq \g$ such that 
\[
\sup
\left\{\,
|\dd^k \tilde{F}_t(u)(x_1,\ldots,x_k)|\ :\ 
u\in \Phi(\tilde U_{g_\circ,k}),\,x_1,\ldots,x_k\in \tilde V_{g_\circ,k},\,t\in\R
\,\right\}<\infty,
\] 
where $\tilde{F}_t:= F_t\circ \Phi^{-1}$.
\end{lem}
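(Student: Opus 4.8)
The plan is to express the chart derivatives $\dd^k\tilde F_t$ through the left--invariant derivatives $L_{x_1}\cdots L_{x_j}F_t$ with $j\le k$, by iterating the elementary relation between these two kinds of directional derivatives, and then to carry the uniform bounds of hypothesis~(ii) along that identity by induction on $k$. The reason the uniformity in $t$ will essentially take care of itself is that every ``coefficient'' occurring in the identity --- the linear operators converting one type of directional derivative into the other, together with all lower--order correction terms --- is independent of $t$ and depends smoothly on the base point, so any supremum over $t\in\R$ that is finite before a manipulation remains finite afterwards.

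First I would assemble the local transition data. Since multiplication, inversion and $\exp_G$ are smooth and $\Phi$ is a diffeomorphism onto an open subset of $\g$, the map $m(u,x):=\Phi(\Phi^{-1}(u)e^x)$ is smooth on an open neighbourhood of $\Phi(W)\times\{0\}$ in $\g\times\g$, with $m(u,0)=u$; put $T_u:=\partial_2 m(u,0)$, so that $T_u=\dd\Phi(\Phi^{-1}(u))\circ\dd\lambda_{\Phi^{-1}(u)}(e)$, where $\lambda_g$ denotes left translation by $g$. As a composite of differentials of diffeomorphisms $T_u$ is a topological automorphism of $\g$, and from $T_u^{-1}=\dd\lambda_{(\Phi^{-1}(u))^{-1}}(\Phi^{-1}(u))\circ\dd(\Phi^{-1})(u)$ together with the smoothness of the group operations and of $\Phi^{-1}$ one sees (cf.\ \cite{Ne06}) that both $(u,x)\mapsto T_u x$ and $(u,y)\mapsto T_u^{-1}y$ are smooth near $(u_\circ,0)$ for every $u_\circ\in\Phi(W)$. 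For $h\in C^\infty(W,\C)$ and $\tilde h:=h\circ\Phi^{-1}$ the basic relation reads $(L_x h)(\Phi^{-1}(u))=\dd\tilde h(u)(T_u x)$. Iterating it and applying the Leibniz rule to differentiate the $u$--dependence of $T_u$, a straightforward induction yields, for $k\le K$,
\[
 \bigl(L_{x_1}\cdots L_{x_k}h\bigr)(\Phi^{-1}(u))
 =\dd^k\tilde h(u)(T_u x_1,\dots,T_u x_k)
 +\sum_{j=1}^{k-1}\dd^j\tilde h(u)\bigl(\Theta^{(k)}_j(u;x_1,\dots,x_k)\bigr),
\]
where each $\Theta^{(k)}_j(u;x_1,\dots,x_k)$ is a $j$--tuple of elements of $\g$, depending smoothly on $(u,x_1,\dots,x_k)$ and vanishing for $x_1=\cdots=x_k=0$.

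Now fix $g_\circ\in W$, set $u_\circ:=\Phi(g_\circ)$, and argue by induction on $k$. For $k=0$ the assertion is hypothesis~(ii) with $k=0$, since $\dd^0\tilde F_t(u)=F_t(\Phi^{-1}(u))$. For the inductive step, solve the identity above for $\dd^k\tilde F_t(u)(T_u x_1,\dots,T_u x_k)$. The term $\bigl(L_{x_1}\cdots L_{x_k}F_t\bigr)(\Phi^{-1}(u))$ is bounded uniformly in $t$ as soon as $\Phi^{-1}(u)\in U_{g_\circ,k}$ and $x_1,\dots,x_k\in V_{g_\circ,k}$, by hypothesis~(ii). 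For $j<k$ the inductive hypothesis supplies neighbourhoods $\tilde U_{g_\circ,j}$, $\tilde V_{g_\circ,j}$ on which $|\dd^j\tilde F_t(u)(z_1,\dots,z_j)|$ is bounded uniformly in $t$ whenever $u\in\Phi(\tilde U_{g_\circ,j})$ and $z_1,\dots,z_j\in\tilde V_{g_\circ,j}$; since $\Theta^{(k)}_j$ is continuous and vanishes at $(u_\circ,0)$, there are, after shrinking, a neighbourhood of $u_\circ$ and a $0$--neighbourhood in $\g$ so small that $u\in\Phi(\tilde U_{g_\circ,j})$ and all components of $\Theta^{(k)}_j(u;x_1,\dots,x_k)$ lie in $\tilde V_{g_\circ,j}$ there. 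Intersecting the finitely many neighbourhoods arising for $j<k$ with the one from hypothesis~(ii), we obtain an open neighbourhood $\cU$ of $u_\circ$ and a $0$--neighbourhood $V\subeq\g$ with $\sup\{\,|\dd^k\tilde F_t(u)(T_u x_1,\dots,T_u x_k)| : u\in\cU,\ x_1,\dots,x_k\in V,\ t\in\R\,\}<\infty$. Finally, using that $(u,y)\mapsto T_u^{-1}y$ is continuous and vanishes at $(u_\circ,0)$, choose an open $g_\circ$--neighbourhood $\tilde U_{g_\circ,k}\subset W$ with $\Phi(\tilde U_{g_\circ,k})\subeq\cU$ and a $0$--neighbourhood $\tilde V_{g_\circ,k}\subeq\g$ with $T_u^{-1}(\tilde V_{g_\circ,k})\subeq V$ for all $u\in\Phi(\tilde U_{g_\circ,k})$; then for such $u$ and $y_1,\dots,y_k\in\tilde V_{g_\circ,k}$ one sets $x_i:=T_u^{-1}y_i\in V$ and obtains $\dd^k\tilde F_t(u)(y_1,\dots,y_k)=\dd^k\tilde F_t(u)(T_u x_1,\dots,T_u x_k)$, which is bounded uniformly in $t$. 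This completes the induction, and with it the proof.

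The routine parts are the chain--rule expansion and the nesting of the neighbourhoods, and the uniformity in $t$ comes for free because none of $T_u$, $T_u^{-1}$, $\Theta^{(k)}_j$ involves $t$. What I expect to require the most care is the joint smoothness, in the locally convex sense, of the transition maps $(u,x)\mapsto T_u x$ and especially $(u,y)\mapsto T_u^{-1}y$ near the base point; this joint continuity is exactly what allows one to replace arbitrary small directions $y_i$ by $T_u x_i$ with $x_i$ small \emph{uniformly in $u$}, which is the decisive last step.
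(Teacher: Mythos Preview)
Your argument is correct and follows the same overall scheme as the paper's proof: induct on $k$, produce an identity expressing $L_{x_1}\cdots L_{x_k}F_t$ as $\dd^k\tilde F_t(u)$ applied to linearly transformed directions plus lower-order chart derivatives, bound the lower-order terms by induction and the left-hand side by hypothesis~(ii), and finally invert the linear transformation $T_u$ using the joint continuity of $(u,y)\mapsto T_u^{-1}y$. The difference lies in how the key expansion is obtained. The paper first reduces to $g_\circ=\1$, then computes $\partial_{t_1}\cdots\partial_{t_k}F_t(ge^{t_1x_1+\cdots+t_kx_k})\big|_{t_i=0}$ in two ways: once via \cite[Lemma~2.2.1]{NS13} as a symmetrized sum of $L_{x_{\sigma(1)}}\cdots L_{x_{\sigma(k)}}F_t(g)$, and once via a Fa\`a di Bruno formula \cite[Lemma~2.1.3]{NS13} as a sum over partitions of $\{1,\ldots,k\}$, whose finest partition gives the top-order term $\dd^kF_t(g)(v_1(g,x_1),\ldots,v_1(g,x_k))$ with $v_1(g,x)=\dd\ell_g(\mathbf 0)(x)$. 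Your route instead iterates the single relation $(L_xh)\circ\Phi^{-1}(u)=\dd\tilde h(u)(T_ux)$ with the Leibniz rule, which is more self-contained since it avoids the external lemmas, while the paper's partition formula makes the combinatorial structure of the lower-order correction terms explicit. One small point of presentation: for $j<k$ your iteration actually produces a finite \emph{sum} of terms of the form $\dd^j\tilde h(u)$ evaluated at different $j$-tuples (for $k=3$, $j=2$ there are three such terms), not a single $\Theta^{(k)}_j$; since $\dd^j\tilde h(u)$ is multilinear rather than linear on $\g^j$ these cannot be combined into one argument, but the bound works identically term by term, so this does not affect the validity of the argument.
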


\begin{proof}
Let $g_\circ\in W$ and set $\ell_{g_\circ}(g):=g_\circ g$ for every $g\in G$. The operators $L_x$ satisfy the relation $L_x(F_t\circ\ell_{g^{}_\circ})=L_x(F_t)\circ\ell_{g^{}_\circ}$. Thus (after replacing $F_t$ by $F_t\circ\ell_{g^{}_\circ}$, $W$ by $g^{-1}_\circ(W)$ and $\Phi$ by $\Phi\circ\ell_{g^{}_\circ}$) we may assume without loss of generality that $g_\circ=\1$. Moreover we may assume that $\Phi(\1)=\mathbf 0$. Let $V:=\Phi(W)\subset \g$. Replacing $F_t$ by $F_t\circ\Phi^{-1}=\tilde{F}_t$, we can assume that $F_t$ is defined on the open $\mathbf 0$-neighborhood $V\subset \g$. We will consider $V$ as a local Lie group with the multiplication 
induced from $G$. 

\textbf{Step 1.} Choose $U\subset V$ open such that $U=U^{-1}$, $\mathbf 0\in U$, and $UU\subset U$. Our goal is to prove (by induction on $k$) that for every $k\in\N$ with $k\leq K$ and every $u\in U$, there exist a $u$-neighborhood $U_{u,k}\subset U$ and a $\mathbf 0$-neighborhood $V_{u,k}\subset \g$ such that 
\[
\sup
\left\{\,|\dd^k F_t(u')(x_1,\ldots,x_k)|\ :\ 
u'\in U_{u,k},\,x_1,\ldots,x_k\in V_{u,k},\,t\in\R
\,\right\}<\infty.
\] 
Then the special case $u=\mathbf 0$ yields the assertion of the lemma for $g_0$. 

\textbf{Step 2.} Fix $u\in U$ and $k\in\N$ with $k \leq K$. By
\cite[Lemma 2.2.1]{NS13},  we have
\begin{equation}
\label{FexLx}
\frac{\partial^k}{\partial t_1\cdots \partial t_k}
F_t(ge^{t_1x_1+\cdots+t_kx_k})
\Big|_{t_1=\cdots=t_k=0}=\frac{1}{k!}\sum_{\sigma\in S_k}
L_{x_{\sigma(1)}}
\cdots
L_{x_{\sigma(k)}}
F_t(g)
\end{equation}
for every $g\in U$. From \eqref{FexLx} and \eqref{upbdLx} it follows that there exist open sets $u\in U^{(1)}_{u,k}\subset U$ and $\mathbf 0\in V^{(1)}_{u,k}\subset \g$ such that
\begin{equation}
\label{sup|F|}
\sup\left\{\left|\frac{\partial^k}{\partial t_1\cdots \partial t_k}
F_t(ge^{t_1x_1+\cdots+t_kx_k})
\Big|_{t_1=\cdots=t_k=0}\right|
\ :\ 
g\in U^{(1)}_{u,k},\,x_1,\ldots,x_k\in V^{(1)}_{u,k},\ t\in\R
\right\}<\infty.
\end{equation}
Next we use \cite[Lemma~2.1.3]{NS13} for the left hand side of \eqref{FexLx} to write
\begin{align}
\label{FaDi}
\frac{\partial^k}{\partial t_1\cdots \partial t_k}
F_t(ge^{t_1x_1+\cdots+t_kx_k})
\Big|_{t_1=\cdots=t_k=0}
&=\sum_{\{A_1,\ldots,A_m\}\in\mathcal P_k}\dd^m F_t(g)\big(v_{|A_1|}(g,x^{}_{A_1}),\ldots,v_{|A_m|}(g,x^{}_{A_m})\big),
\end{align}
where $\mathcal P_k$ is the set of partitions of 
$\{1,\ldots,k\}$, and 
for every set $A=\{a_1,\ldots,a_p\}\subseteq \{1,\ldots, k\}$, we define $x_A:=(x_{a_1},\ldots x_{a_p})$ and 
\[
v_{p}:U\times \g^{p}\to \g\ ,\ 
v_{p}(g,x_A):=\frac{\partial^p}{\partial t_{a_1}\cdots\partial t_{a_p}}
(ge^{t_{a_1}x_{a_1}+\cdots+t_{a_p}x_{a_p}})\bigg|_{t_{a_1}=\cdots=t_{a_p}=0}.
\]

The term on the right hand side of \eqref{FaDi} corresponding to the partition 
$\{\{1\},\ldots,\{k\}\}$ is 
\begin{equation}
\label{ddkpar}
\dd^k F_t(g)(v_1(g,x_1)\ldots,v_1(g,x_k))
\end{equation}
 and the remaining terms are partial derivatives of order strictly less than $k$. Let us denote the sum of these remaining terms by $A_t(g,x_1,\ldots,x_k)$. Since the maps $v_p(\cdot,\cdot)$ are smooth, 
 we can assume (by induction hypothesis) that there exist open sets 
$u\in U^{(2)}_{u,k}\subset U^{(1)}_{u,k}$ and $\mathbf 0\in V^{(2)}_{u,k}\subset V^{(1)}_{u,k}$ and a 
constant $M>0$ (depending only on $F$) such that  
\[
|A_t(g,x_1,\ldots,x_k)|<M\text{ for every }g\in U^{(2)}_{u,k},\,x_1,\ldots,x_k\in V^{(2)}_{u,k},\,
\text{ and }t\in\R.
\] 
Thus, given the upper bound \eqref{sup|F|} and the expression
\eqref{ddkpar} for the first term in the summation, to complete the proof of the claim 
in Step $1$, it suffices to prove the following statement:

\begin{itemize}
\item There exist open sets $u\subset U'\subset U^{(2)}_{u,k}$ and $\mathbf 0\in V'\subset \g$ such that for every $g\in U'$ and every $y\in V'$, the equation
\begin{equation}
\label{vgxy}
v_1(g,x)=y
\end{equation}
has a solution $x\in V^{(2)}_{u,k}$.
\end{itemize}
Next we prove the latter statement. First note that 
$v_1(g,x)=\dd\ell_g(\mathbf 0)(x)$, where $\ell_g(h)= gh$, 
and the chain rule implies that the solution to \eqref{vgxy} is given by 
$x=\dd\ell_{g^{-1}}(g)(y).$ 
From smoothness (in fact only continuity) of the map 
\[
\varphi:U\times \g\to \g\ ,\ \varphi(g,y):= \dd\ell_{g^{-1}}(g)(y)
\]
and the relation $\varphi(u,\mathbf 0)=\mathbf 0$
it follows that there exist $U'$ and $V'$ such that 
$\varphi(U'\times V')\subset V^{(2)}_{u,k}$.
\end{proof}

\begin{dfn}\label{eqpolybactionsdef}
Let $E$ be a locally convex space and let $\alpha:\R\rightarrow \GL(E)$, $t\mapsto \alpha_t$ 
be a group homomorphism. Then $\alpha$ is called 
\begin{itemize}
\item[(a)] \textit{equicontinuous} if the subset $\{\alpha_t: t\in \R\}\subset \End(E)$ is equicontinuous  (cf.~Def.~\ref{equicontdef}).
\item[(b)] \textit{polynomially bounded} if for every continuous seminorm $p$ on $E$ there exists an $N\in\N_0$ such that $\{(1+|t|^N)^{-1} \alpha_t : t\in \R\}$ 
is an equicontinuous subset of $\Hom(E,(E,p))$, where $(E,p)$ denotes $E$, endowed with the 
topology defined by the single seminorm~$p$.
\end{itemize}
\end{dfn}

\begin{thm}\label{smoothpropgen} {\rm(Zellner's Invariance Theorem)} 
Let $\gamma:\R \rightarrow \Aut(G)$ be a one-parameter group  and 
$\alpha:\R\to\Aut(\g_\C)$ be defined by $\alpha_t:=\L(\gamma_t)_\C\in \Aut(\g_\C)$ for $t\in \R$. 
Assume that $\gamma$ defines a continuous action of $\R$ on $G$.
Let $\pi^\#:G\rtimes_\gamma\R \rightarrow\U(\H ), 
(g,t) \mapsto \pi(g) U_t$ 
be a continuous unitary representation and $\H^\infty$ be the space of smooth vectors 
with respect to  $\pi$. For $f \in L^1(\R)$, let $U_f =\int_\R f(t) U_t \ dt \in B(\H)$.
Assume that at least one of the following conditions hold:
\begin{itemize}
\item[\rm(a)] $\alpha$ is equicontinuous and $f\in L^1(\R)$; or:
\item[\rm(b)] $\alpha$ is polynomially bounded and $f\in \cS(\R)$. 
\end{itemize}
Then $U_f \H^\infty \subeq \H^\infty$ and 
\begin{equation}
  \label{eq:duti}
\dd\pi(y_1) \cdots \dd\pi(y_n) U_fv 
= \int_\R f(t) U_t \dd\pi(\alpha_{-t}(y_1)) \cdots \dd\pi(\alpha_{-t}(y_n)) v\, dt 
\end{equation}
for $y_1, \ldots, y_n \in \g_\C$ and $v \in \H^\infty$.

\end{thm}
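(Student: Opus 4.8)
The plan is to prove the statement in two stages: first establish that $U_f v$ is smooth for $v \in \cH^\infty$, together with the base case $n=1$ of the commutation formula; then iterate.

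\textbf{Stage 1: smoothness of $U_f v$ and the formula for $n=1$.} Fix $v \in \cH^\infty$. I would analyze the orbit map $g \mapsto \pi(g) U_f v$. Using the intertwining relation $U_t \pi(g) U_t^* = \pi(\gamma_t(g))$, one rewrites
\[ \pi(g) U_f v = \int_\R f(t)\, \pi(g) U_t v\, dt = \int_\R f(t)\, U_t \pi(\gamma_{-t}(g)) v\, dt. \]
So in a chart $\Phi \colon W \to \g$ around a point $g_\circ$, the orbit map is, up to the bounded operators $U_t$, an integral over $t$ of the orbit maps $g \mapsto \pi(\gamma_{-t}(g)) v$, which are smooth in $g$ for each $t$ since $v \in \cH^\infty$. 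The point is to differentiate under the integral sign in $g$. To do this I need locally uniform-in-$t$ bounds on the $g$-derivatives of $F_t(g) := \langle \pi(\gamma_{-t}(g)) v, w\rangle$ (for $w$ in a total set, or directly on the $\cH$-valued map). This is exactly the situation Lemma~\ref{lemestimates} is built for: I verify hypothesis (ii), i.e.\ locally uniform bounds on $L_{x_1}\cdots L_{x_k} F_t$. Now $L_x F_t(g) = \langle \dd\pi(\L(\gamma_{-t})(x)) \pi(\gamma_{-t}(g)) v, w \rangle$, and more generally an iterated $L_{x_1}\cdots L_{x_k}$ produces $\dd\pi(\alpha_{-t}(x_1)) \cdots \dd\pi(\alpha_{-t}(x_k))$ applied to $\pi(\gamma_{-t}(g))v$ (using that $\L(\gamma_{-t})$ intertwines left-invariant vector fields on $G$ with their images, and that $\dd\pi$ is the derived representation). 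Here $\alpha_t = \L(\gamma_t)_\C$. The norm of $\dd\pi(\alpha_{-t}(x_1))\cdots\dd\pi(\alpha_{-t}(x_k))v$ is bounded, for $x_i$ in a fixed $\mathbf 0$-neighborhood, by a product of seminorms of the $\alpha_{-t}(x_i)$ — this is where equicontinuity (case (a)) gives a $t$-independent bound directly, and polynomial boundedness (case (b)) gives a bound of the form $C(1+|t|)^{N}$ for some fixed $N$; multiplying against $f(t)$, which is $L^1$ in case (a) and Schwartz in case (b), keeps the integral finite. This verifies (ii), Lemma~\ref{lemestimates} then provides the locally uniform chart-derivative bounds, standard differentiation-under-the-integral (in the locally convex setting, cf.\ \cite{Ne06}) yields smoothness of $g \mapsto \pi(g)U_f v$, hence $U_f v \in \cH^\infty$, and simultaneously the computation of $\dd\pi(y) U_f v$ by differentiating at $g = \1$ gives
\[ \dd\pi(y) U_f v = \int_\R f(t)\, U_t\, \dd\pi(\alpha_{-t}(y))\, v\, dt \]
for $y \in \g_\C$ (first for $y \in \g$, then by complex-linear extension).

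\textbf{Stage 2: induction on $n$.} Suppose \eqref{eq:duti} holds for $n$; I want it for $n+1$. Apply $\dd\pi(y_0)$ to both sides. The right-hand side is $\int_\R g(t) U_t w_t\, dt$ where $w_t := \dd\pi(\alpha_{-t}(y_1)) \cdots \dd\pi(\alpha_{-t}(y_n)) v \in \cH^\infty$ and $g(t) := f(t)$. This is again of the form $U_{\tilde f}$ applied to a family of smooth vectors, but now the integrand vector depends on $t$. The cleanest route is to re-run the Stage~1 argument but tracking the extra $\dd\pi(\alpha_{-t}(y_i))$ factors: equivalently, observe that $\dd\pi(y_0)\,U_t w_t$, when we pull $\dd\pi(y_0)$ through $U_t$ using $U_t \dd\pi(x) U_t^* = \dd\pi(\L(\gamma_t)(x))$ — valid on $\cH^\infty$ — becomes $U_t\, \dd\pi(\alpha_{-t}(y_0))\, w_t = U_t\, \dd\pi(\alpha_{-t}(y_0))\, \dd\pi(\alpha_{-t}(y_1)) \cdots \dd\pi(\alpha_{-t}(y_n))\, v$. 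Justifying that $\dd\pi(y_0)$ commutes with the integral requires exactly the same kind of locally-uniform estimate: the norm of $\dd\pi(\alpha_{-t}(y_0))\cdots\dd\pi(\alpha_{-t}(y_n))v$ is bounded by a product of $n+1$ seminorms of shifted elements, each equicontinuous (case (a)) or polynomially bounded (case (b)); the product of $n+1$ polynomially bounded factors is still polynomially bounded, hence integrable against a Schwartz $f$. So the induction goes through with no new idea, only bookkeeping of more factors.

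\textbf{Main obstacle.} The technical heart is Stage~1: rigorously justifying differentiation under the integral for an $\cH$-valued (equivalently, weakly-then-strongly) map on an infinite-dimensional Lie group, which is precisely why Lemma~\ref{lemestimates} was isolated — it converts the natural $L_x$-type bounds (which follow from the representation theory and the growth hypothesis on $\alpha$) into the chart-derivative bounds $\|\dd^k \tilde F_t(u)(x_1,\dots,x_k)\| \le C$ needed to apply the dominated-convergence/mean-value arguments underlying smoothness in locally convex calculus. The one genuinely delicate verification is checking hypothesis (ii) of that lemma uniformly in $t$, i.e.\ that iterating $L_x$ on $g \mapsto \langle \pi(\gamma_{-t}(g))v, w\rangle$ really produces $\dd\pi(\alpha_{-t}(x_1))\cdots\dd\pi(\alpha_{-t}(x_k))$ acting on the smooth vector $\pi(\gamma_{-t}(g))v$, and that the resulting bound depends on $t$ only through the growth of $\alpha_{-t}$ — equicontinuous in case (a), polynomial in case (b) — so that pairing with $f \in L^1$, resp.\ $f \in \cS(\R)$, leaves a finite integral. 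Everything else (pulling $\dd\pi$ and $U_t$ past each other on $\cH^\infty$, the complex-linear extension, the induction) is routine once this estimate is in hand.
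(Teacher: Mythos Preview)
Your approach is essentially the paper's: compute the $L_{x_1}\cdots L_{x_k}$-derivatives of the matrix coefficients $F_t(g)=\langle \pi(g)U_tv,w\rangle$, use the intertwining $\dd\pi(x)U_t=U_t\dd\pi(\alpha_{-t}(x))$ to bound these by $p_k(\alpha_{-t}(x_1))\cdots p_k(\alpha_{-t}(x_k))\|w\|$, invoke Lemma~\ref{lemestimates} to convert to chart-derivative bounds, differentiate under the integral, and conclude smoothness via \cite[Thm.~7.2]{Ne10}. The paper also obtains the $n$-fold formula~\eqref{eq:duti} directly from the computed $L_{x_1}\cdots L_{x_n}F_t$ rather than by your separate induction, but that is a cosmetic difference.

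There is one genuine slip in case~(b). Hypothesis~(ii) of Lemma~\ref{lemestimates} asks for a bound that is \emph{uniform in $t$}, and polynomial boundedness of $\alpha$ only gives $|L_{x_1}\cdots L_{x_k}F_t(g)|\le C(1+|t|)^{N_k k}$, which does \emph{not} verify~(ii) as written; you cannot ``multiply against $f$'' before applying the lemma, since the lemma has no integral in its hypothesis. The paper fixes this by applying Lemma~\ref{lemestimates} to the renormalized function $H_t^{(K)}(g):=(1+|t|^{M_K})^{-K}F_t(g)$, which \emph{is} uniformly bounded for $k\le K$, and only afterwards absorbing the polynomial factor $(1+|t|^{M_K})^K$ back into $f\in\cS(\R)$ when invoking Lemma~\ref{parameterintegrallem}. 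You should insert this normalization step; without it your appeal to Lemma~\ref{lemestimates} in case~(b) is formally invalid, even though the underlying estimate you want is correct. (A minor aside: your displayed formula for $L_xF_t(g)$ has $\dd\pi(\alpha_{-t}(x))$ on the wrong side of $\pi(\gamma_{-t}(g))$; since $L_x$ differentiates on the right, it should read $\langle \pi(\gamma_{-t}(g))\,\dd\pi(\alpha_{-t}(x))v,w\rangle$. This does not affect the bound.)
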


\begin{prf}
Let $v\in \H^\infty, w\in \H$ and consider 
$$F: \R \times G \rightarrow \C, (t,g) \mapsto \langle \pi(g) U_tv, w \rangle.$$
We set $F_t(g):=F(t,g)$. Since $\pi(g)U_t=\pi^\#(g,t)=U_t\pi(\gamma_{-t}g)$ and $v\in \H^\infty$, we conclude that $U_t\H^\infty\subseteq \H^\infty$ and $F_t \in C^{\infty}(G)$. Note that
\begin{align*}
L_{x_1}\cdots L_{x_k} F_t (g) & = \langle \pi(g)\dd\pi(x_1)\cdots\dd\pi(x_k)U_tv, w \rangle 
= \langle \pi(g) U_t \dd\pi(\alpha_{-t}(x_1))\cdots \dd\pi(\alpha_{-t}(x_k)) v, w \rangle 
\end{align*}
for $x_1,\dots,x_k\in \g$. Since $v\in \H^\infty$, the $k$-linear map 
$$\g^k\rightarrow \H,(x_1,\dots,x_k)\mapsto \dd\pi(x_1)\cdots\dd\pi(x_k)v$$
is continuous. From Proposition \ref{multilinprop} we thus obtain for every $k\in \N$ a continuous seminorm $p_k$ on $\g$ such that 
\[ \|\dd\pi(x_1)\cdots\dd\pi(x_k)v\|\leq p_k(x_1)\cdots p_k(x_k) \quad \mbox{ 
for all } \quad x_1,\dots,x_k\in\g.\] We conclude
\begin{align}\label{smoothpropgeneq1}
|L_{x_1}\cdots L_{x_k} F_t (g)| \leq p_k(\alpha_{-t}(x_1))\cdots p_k(\alpha_{-t}(x_k))\cdot \|w\|.
\end{align}

(a) Now assume first that $\alpha$ is equicontinuous and $f\in L^1(\R)$. By Proposition \ref{equicontprop} we find for every $k\in \N$ a continuous seminorm $q_k$ on $\g$ such that 
$p_k(\alpha_{-t}(x))\leq q_k(x)$ holds for all $t\in\R,x \in \g$. Let $U_k:=\{x\in \g : q_k(x)<1\}$. 
Then we obtain from \eqref{smoothpropgeneq1}
\begin{align}\label{smoothpropgeneq2}
\sup\left\{\,\big|L_{x_1}\cdots L_{x_k} F_t (g)\big| : g\in G, x_1,\dots,x_k\in U_k, t\in\R\,\right\} \leq \|w\| < \infty.
\end{align}
Let $g_0\in G$ and choose a chart $\Phi:W\rightarrow \g$ with $W\subset G$ an open neighborhood of $g_0$. Now \eqref{smoothpropgeneq2} implies that $F\vert_{\R\times W}$ satisfies the assumptions of Lemma \ref{lemestimates}. Thus, for every $u_0\in \Phi(W)$ and $k\in \N$, 
there exist an open $u_0$-neighborhood $U_{u_0,k}\subset\Phi(W)$ and an open $0$-neighborhood $V_{u_0,k}\subset \g$ such that 
$$\sup\left\{\, \big|\dd^k\tilde{F}_t (u)(x_1,\dots,x_k)\big| : u\in U_{u_0,k}, x_1,\dots,x_k\in V_{u_0,k}, t\in\R\,\right\} < \infty,$$
where $\tilde{F}_t:=F_t\circ \Phi^{-1}$. Since $f\in L^1(\R,\C)$, 
Lemma \ref{parameterintegrallem} yields that the map
$$\Phi(W)\rightarrow \C,\quad u\mapsto \int_\R f(t)\tilde{F}_t(u) dt  = \int f(t)\langle \pi(\Phi^{-1}(u))U_tv, w \rangle dt$$
is smooth. We conclude that 
$$G\rightarrow \C, \quad g \mapsto \langle\pi(g)\pi(f)v,w\rangle=\int_\R f(t)\langle \pi(g)U_tv, w \rangle dt$$
is smooth for every $w\in \H$. With $w=\pi(f)v$ we now obtain from \cite[Thm. 7.2]{Ne10} that $\pi(f)v\in \H^\infty$. Finally, \eqref{eq:duti} follows from the corresponding 
relation for the functions $F_t$. This proves (a). 

(b) Now assume that $\alpha$ is polynomially bounded and $f\in \cS(\R)$. Then there exists for every $k\in\N$ a continuous seminorm $q_k'$ on $\g$ and $N_k\in\N_0$ such that 
\[ p_k(\alpha_t(x))\leq (1+|t|^{N_k})q_k'(x) \quad \mbox{  for all } \quad x\in \g, t\in \R.\] \
From \eqref{smoothpropgeneq1} we thus obtain
\begin{align}\label{smoothpropgeneq3}
|L_{x_1}\cdots L_{x_k} F_t (g)| \leq (1+|t|^{N_k})^kq_k'(x_1)\cdots q_k'(x_k)\cdot \|w\|.
\end{align}
Let $g_0\in G$ and choose a chart $\Phi:W\rightarrow \g$ with $W\subset G$ an open neighborhood of $g_0$. Now fix $K\in \N$ and set $M_K:=\max\{N_1,\dots,N_K\}$ and 
$$U'_K:=\{ x\in \g : q_1'(x) < 1, \dots, q_K'(x) < 1\}.$$
Moreover define $H^{(K)}(t,g):=(1+|t|^{M_K})^{-K}F(t,g)$ and $H_t^{(K)}(g):=H^{(K)}(t,g)$. From \eqref{smoothpropgeneq3} we obtain 
\begin{align*}
\sup\left\{\,\big|L_{x_1}\cdots L_{x_k} H_t^{(K)} (g)\big| : g\in G, x_1\dots,x_k\in U'_K,t\in\R \right\} \leq \|w\| <\infty
\end{align*}
for all $k\leq K$. Thus Lemma \ref{lemestimates}, applied to $H^{(K)}\vert_{\R\times W}$, 
implies that for every $u_0\in \Phi(W)$ there exist an open $u_0$-neighborhood $U_{u_0,K}\subset\Phi(W)$ and an open $0$-neighborhood $V_{u_0,K}\subset \g$ such that 
\begin{align}\label{smoothpropgeneq4}
\sup\left\{ \, \big|\dd^K\tilde{H}_t^{(K)} (u)(x_1,\dots,x_K)\big| : u\in U_{u_0,K}, x_1,\dots,x_K\in V_{u_0,K}, t\in\R\,\right\} < \infty,
\end{align}
where $\tilde{H}_t^{(K)}:=H_t^{(K)}\circ \Phi^{-1}$. Consider
$$\hat F: \R\times \Phi(W) \rightarrow \C, (t,u) \mapsto f(t) F(t,\Phi^{-1}(u))$$
and set $\hat F_t(u):=\hat F(t,u)$. Then, for every $K\in \N$, 
\[ \dd^K\hat F_t(u)(x_1,\dots,x_K)=(1+|t|^{M_K})^{K}f(t) \cdot \dd^K\tilde{H}_t^{(K)} (u)(x_1,\dots,x_K).\]
Since $f\in \cS(\R,\C)$ we have $(1+|t|^{M_K})^{K}f(t) \in L^1(\R,\C)$ for all $K\in \N$. Thus \eqref{smoothpropgeneq4} and Lemma \ref{parameterintegrallem} show that the map
$$\Phi(W)\rightarrow \C,\quad u\mapsto \int_\R \hat F(t,u) dt  = \int_\R f(t)\langle \pi(\Phi^{-1}(u))U_tv, w \rangle dt$$
is smooth. We conclude that 
$$G\rightarrow \C, \quad g \mapsto \langle\pi(g)\pi(f)v,w\rangle=\int_\R f(t)\langle \pi(g)U_t v, w \rangle dt$$
is smooth for all $w\in \H$. As above we obtain with \cite[Thm. 7.2]{Ne10} that $\pi(f)v\in \H^\infty$ and that \eqref{eq:duti} holds. 
\end{prf}

\begin{rmk}
In the situation of Theorem \ref{smoothpropgen}, assume that $\alpha$ has the infinitesimal generator $A:D(A)\rightarrow \g$. 
Then growth bounds of $\alpha$ can often be determined in terms of the generator~$A$. In particular, if $\g$ is finite dimensional, then $\alpha$ is polynomially bounded if and only if the spectrum of $A$ is purely imaginary. However, for an infinite dimensional Hilbert space $\cH$ there is a one-parameter group $\alpha:\R\rightarrow B(\H)$ with $\|\alpha_t\|=e^{|t|}$ whose generator has purely imaginary spectrum, cf. \cite[Example 1.2.4]{vN96}.
\end{rmk}

\begin{rmk}\label{smoothpropgenrmk}
In the situation of Theorem \ref{smoothpropgen}, let $B$ denote the self-adjoint generator of $U_t$. Assume that 
$f\in \cS(\R)$, and define $\hat f(s)
:= \int_\R f(t) e^{ist} dt$. Then $\int_\R f(t) U_t v dt= \hat f(B)v$, where $\hat f(B)$ is defined by functional calculus of $B$. Since the map $\cS(\R)\rightarrow \cS(\R),f\mapsto \hat f$ is a bijection, we see that $h(B)v\in \H^\infty$ for all $v\in \H^\infty, h\in \cS(\R)$.
\end{rmk}

\begin{dfn}
An element $x\in \g$ is called \textit{elliptic} if the subgroup
 $\Ad(e^{\R x}) \subset \End(\g)$ is equicontinuous. 
\end{dfn}

\begin{cor}\label{smoothprop}
Let $\pi:G \rightarrow\U(\H )$ be a continuous unitary representation, $x\in \g$, 
and set $\alpha_t :=\Ad(e^{tx})$. 
Assume either that $x$ is elliptic and $f\in L^1(\R)$, or that 
$\alpha:\R\to\Aut(\g_\C)$, 
as defined in Theorem \ref{smoothpropgen},
is polynomially bounded and $f\in \cS(\R)$. Then 
$U_f \H^\infty \subeq \H^\infty$. 
\end{cor}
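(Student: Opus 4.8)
The plan is to reduce Corollary~\ref{smoothprop} to Theorem~\ref{smoothpropgen} by building, out of $(\pi,\H)$ and the element $x\in\g$, a continuous unitary representation of a suitable semidirect product. First I would take $\gamma_t(g):=e^{tx}ge^{-tx}$, i.e.\ $\gamma_t$ is conjugation by $e^{tx}$. Conjugation by a fixed group element is a smooth automorphism of $G$ with smooth inverse $\gamma_{-t}$, the assignment $t\mapsto\gamma_t$ is a group homomorphism $\R\to\Aut(G)$ (because $t\mapsto e^{tx}$ is a one-parameter subgroup), and the map $\R\times G\to G,\ (t,g)\mapsto e^{tx}ge^{-tx}$ is continuous since the multiplication of $G$ is smooth and $t\mapsto e^{tx}=\exp_G(tx)$ is smooth; hence $\gamma$ defines a continuous action of $\R$ on $G$. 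Differentiating $\gamma_t$ at $\1$ gives $\L(\gamma_t)=\Ad(e^{tx})$, so the one-parameter group $\alpha_t:=\L(\gamma_t)_\C\in\Aut(\g_\C)$ attached to $\gamma$ in Theorem~\ref{smoothpropgen} is exactly the $\alpha_t=\Ad(e^{tx})_\C$ of the corollary.

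Next I would set $U_t:=\pi(e^{tx})$. Since $t\mapsto e^{tx}$ is a smooth one-parameter subgroup of $G$ and $\pi$ is a continuous unitary representation, $(U_t)_{t\in\R}$ is a continuous unitary one-parameter group, and
\[ U_t\pi(g)U_t^*=\pi(e^{tx})\pi(g)\pi(e^{-tx})=\pi(e^{tx}ge^{-tx})=\pi(\gamma_t(g)).\]
Therefore $\pi^\#(g,t):=\pi(g)U_t$ is a continuous unitary representation of $G\rtimes_\gamma\R$: joint continuity follows from strong continuity of $\pi$ and of $(U_t)$, and the identity $\pi^\#(g,t)\pi^\#(h,s)=\pi(g)\pi(\gamma_t(h))U_{t+s}=\pi^\#(g\gamma_t(h),t+s)$ is precisely the multiplication law of the semidirect product. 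The space $\H^\infty$ of smooth vectors for $\pi$ is the one occurring in Theorem~\ref{smoothpropgen}, and $U_f=\int_\R f(t)U_t\,dt=\int_\R f(t)\pi(e^{tx})\,dt$ is the operator under consideration.

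In case~(b) this already finishes the argument: $\alpha$ is polynomially bounded and $f\in\cS(\R)$ by hypothesis, so Theorem~\ref{smoothpropgen}(b) gives $U_f\H^\infty\subeq\H^\infty$ directly. For case~(a) the one point still to verify is that ellipticity of $x$ --- equicontinuity of $\{\Ad(e^{tx}):t\in\R\}$ in $\End(\g)$ --- implies equicontinuity of $\{\alpha_t:t\in\R\}$ in $\Aut(\g_\C)$, as demanded by Theorem~\ref{smoothpropgen}(a). Here one uses that, writing $\g_\C=\g\oplus i\g$ as a real locally convex space, every continuous seminorm $p$ on $\g_\C$ satisfies $p(u+iv)\le q(u)+q(v)$ for a suitable continuous seminorm $q$ on $\g$, while $\alpha_t$ acts diagonally, $\alpha_t(u+iv)=\Ad(e^{tx})u+i\,\Ad(e^{tx})v$; so an equicontinuity bound $q(\Ad(e^{tx})w)\le q'(w)$ on $\g$ yields $p(\alpha_t(u+iv))\le q'(u)+q'(v)$, i.e.\ the required equicontinuity on $\g_\C$. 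Then Theorem~\ref{smoothpropgen}(a) applies with $f\in L^1(\R)$ and gives $U_f\H^\infty\subeq\H^\infty$.

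I expect the only genuine obstacle to be conceptual, namely noticing that the inner one-parameter group of automorphisms $\gamma_t$ together with the operators $U_t=\pi(e^{tx})$ form a representation of $G\rtimes_\gamma\R$ to which Zellner's Invariance Theorem applies; everything else is either a formal verification (that $\pi^\#$ is a homomorphism, and continuity) or the short seminorm computation relating equicontinuity on $\g$ to equicontinuity on $\g_\C$.
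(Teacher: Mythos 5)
Your proposal is correct and follows essentially the same route as the paper: one forms $\gamma_t(g)=e^{tx}ge^{-tx}$ and the representation $\pi^\#(g,t)=\pi(g)\pi(e^{tx})$ of $G\rtimes_\gamma\R$ and invokes Theorem~\ref{smoothpropgen}, noting $\L(\gamma_t)=\Ad(e^{tx})$. The additional verifications you supply (the semidirect product law, continuity of the action, and the passage from equicontinuity on $\g$ to equicontinuity on $\g_\C$) are all correct details that the paper leaves implicit.
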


\begin{prf}
Define $\gamma:\R\times G \rightarrow G,(t,g)\mapsto e^{tx} g e^{-tx}$. Then 
$\pi^\#(g,t) := \pi(ge^{tx})$
is a continuous unitary representation. As $\alpha_t = \L(\gamma_t)=\Ad(e^{tx}),$ the assertion follows 
from Theorem \ref{smoothpropgen}.
\end{prf}

\section{The Spectral Translation Formula} 

Let $\gamma$ and $\alpha$ be as in Theorem \ref{smoothpropgen}. 
We assume, in addition, that $\g$ is complete
and that $\gamma$ defines continuous actions of $\R$ on $G$ and $\g$. 
If $\alpha$ is equicontinuous, we define the spectrum $\Spec_\alpha(x)$ of an element $x\in\g_\C$ 
and the Arveson spectral subspace $\g_\C(E)$ for $E\subset \R$ as 
in  Definition~\ref{arvspecdef}(b). 
A continuous unitary 
one-parameter group $(U_t)_{t \in \R}$ on $\cH$ is 
clearly equicontinuous. Therefore we can consider $\Spec_U(v)$ for $v\in\cH$ and the Arveson spectral subspaces $\cH(E)$ and 
$\cH^\infty(E):=\cH^\infty\cap \cH(E)$. 
If $\alpha$ is only polynomially bounded, we likewise define the spectrum $\Spec_\alpha(x;\cS)$ 
of an element $x\in\g_\C$ and the Arveson spectral subspace $\g_\C(E;\cS)$ for $E\subset \R$ 
(Definition~\ref{arvspecdef}(a)). 
By Lemma \ref{lemequalspectra} we have $\Spec_\alpha(x)=\Spec_{\alpha}(x;\cS)$ and $\g_\C(E)=\g_\C(E;\cS)$ if $\alpha$ is equicontinuous.

\begin{thm}\label{thm:specrel} {\rm(Spectral translation formula)} 
Assume that $\g$ is a complete locally convex Lie algebra, $\gamma:\R\to\Aut(G)$
defines a continuous action of $\R$ on $G$, and $\alpha:\R\to\Aut(\g_\C)$, as defined in
Theorem  
\ref{smoothpropgen}, defines a continuous action of $\R$ on $\g_\C$. 
Let $\pi^\#(g,t) = \pi(g) U_t$ be a continuous unitary representation 
of $G\rtimes_\gamma\R$ on $\cH$ and $\H^\infty$ be the space of smooth vectors 
with respect to $\pi$. 
\begin{itemize}
\item[{\upshape (i)}] Assume that $\alpha$
is equicontinuous. 
Then, for any subsets $E, F \subeq \R$, we have 
\[ \dd\pi(\g_\C(E)) \cH^\infty(F) \subeq \cH^\infty(E + F).\] 
\item[{\upshape (ii)}] Assume that $\alpha$ is polynomially bounded. Then,
for any subsets $E, F \subeq \R$, we have 
\[ \dd\pi(\g_\C(E;\cS)) \cH^\infty(F) \subeq \cH^\infty(E + F).
\] 
\end{itemize}
\end{thm}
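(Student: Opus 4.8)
The plan is to reduce the spectral translation formula to a statement about Fourier supports, using Zellner's Invariance Theorem (Theorem~\ref{smoothpropgen}) as the engine that transfers spectral localization from the one-parameter group $U_t$ to the derived action $\dd\pi$. Recall the basic characterization of Arveson spectral subspaces: a vector $v\in\cH$ satisfies $v\in\cH(F)$ if and only if $U_f v = 0$ for every $f\in L^1(\R)$ (or $f\in\cS(\R)$ in the polynomially bounded case) with $\supp(\hat f)\cap F=\eset$; equivalently, $v\in\cH(F)$ iff $U_f v = 0$ whenever $\hat f$ vanishes on a neighborhood of $F$. The analogous statement holds for $\g_\C(E)$ and the action $\alpha$: $x\in\g_\C(E)$ iff $\alpha_f(x):=\int_\R f(t)\alpha_t(x)\,dt = 0$ for all suitable $f$ with $\supp(\hat f)\cap E=\eset$. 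So the goal becomes: given $x\in\g_\C(E)$ and $v\in\cH^\infty(F)$, show that for every $f$ with $\supp(\hat f)$ disjoint from $E+F$ we have $U_f\big(\dd\pi(x)v\big) = 0$.

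The key computational step is formula~\eqref{eq:duti} from Theorem~\ref{smoothpropgen}, applied with $n=1$: for $y\in\g_\C$, $w\in\cH^\infty$, and $f\in L^1(\R)$ (resp.\ $\cS(\R)$),
\[
U_f\,\dd\pi(y)w \;=\; \dd\pi(y)\,U_{\check f}w \;-\; \big[\dd\pi(y),U_{\check f}\big]w
\]
is not quite the right rearrangement; instead I would proceed directly. Write $\dd\pi(x)v$ and hit it with $U_f$. The cleanest route is the reverse of~\eqref{eq:duti}: since $v\in\cH^\infty$, one has $\dd\pi(x)v\in\cH^\infty$, and I want to express $U_f \dd\pi(x) v$ as an integral involving both $f$ and the $\alpha$-orbit of $x$. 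Concretely, using $U_t\dd\pi(x)U_t^* = \dd\pi(\alpha_t(x))$ one gets
\[
U_f\,\dd\pi(x)v \;=\; \int_\R f(t)\,U_t\,\dd\pi(x)\,v\,dt \;=\; \int_\R f(t)\,\dd\pi(\alpha_t(x))\,U_t v\,dt.
\]
Now I would like to decouple the two dependencies on $t$. If $x\in\g_\C(E)$ and $v\in\cH(F)$, the idea is to test against an arbitrary $w\in\cH$ and write $\langle U_f\dd\pi(x)v,w\rangle = \int_\R f(t)\langle\dd\pi(\alpha_t(x))U_t v,w\rangle\,dt$, then analyze this as a tempered distribution in the Fourier variable. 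The spectral condition on $x$ says the $\g_\C$-valued function $t\mapsto\alpha_t(x)$ has Fourier transform (as a $\g_\C$-valued distribution) supported in $E$; the condition on $v$ says $t\mapsto U_t v$ has $\cH$-valued Fourier transform supported in $F$. Pairing through the continuous bilinear map $(\xi,u)\mapsto\langle\dd\pi(\xi)u,w\rangle$ and taking Fourier transforms turns the product into a convolution, whose support lies in $E+F$; hence if $\supp(\hat f)\cap(E+F)=\eset$ the integral vanishes, giving $U_f\dd\pi(x)v=0$, i.e.\ $\dd\pi(x)v\in\cH(E+F)$. Combined with $\dd\pi(x)v\in\cH^\infty$ (from Theorem~\ref{smoothpropgen}, or more simply since $\cH^\infty$ is $\dd\pi$-invariant) we conclude $\dd\pi(x)v\in\cH^\infty(E+F)$, which is the claim.

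The main obstacle is making the ``product becomes convolution, supports add'' step rigorous at the level of $\g_\C$-valued and $\cH$-valued tempered distributions on a general complete locally convex space $\g$. One must verify: (1) that $t\mapsto\alpha_t(x)$ and $t\mapsto U_t v$ are genuinely tempered (the polynomial bound on $\alpha$ and the unitarity of $U$ handle this, plus completeness of $\g$ to integrate $\g_\C$-valued functions); (2) that the continuous bilinear evaluation map intertwines the relevant operations so that the Fourier support of the paired function is contained in the (closure of the) sum $E+F$ — this is where one invokes the characterization of $\g_\C(E)$ via vanishing of $\alpha_f(x)$ for $\hat f$ supported off $E$, approximating $f$ suitably and using that $\dd\pi(\alpha_g(x))U_h v$ (for $g,h$ test functions) has the obvious support behavior under convolution. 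Case~(ii) requires doing all of this within the Schwartz class rather than $L^1$, which is where the polynomial boundedness of $\alpha$ is essential to keep the relevant integrals convergent and to stay inside $\cS(\R)$ when multiplying by polynomial weights; Lemma~\ref{lemequalspectra} ensures the two notions of spectrum coincide when $\alpha$ is equicontinuous, so (i) is formally the special case of (ii) with $L^1$ in place of $\cS$, and I would present (ii) and remark that (i) follows by the same argument with $\cS(\R)$ replaced by $L^1(\R)$ throughout.
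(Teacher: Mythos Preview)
Your heuristic is correct and matches the spirit of the paper's argument, but the obstacle you flag in your last paragraph is exactly the point where your sketch stops being a proof and where the paper supplies the missing idea. You want to apply a ``support of product $\subeq$ sum of supports'' principle to the bilinear pairing $(\xi,u)\mapsto \dd\pi(\xi)u$ on $\g_\C\times\cH$, but $\dd\pi(\xi)$ is only defined on $\cH^\infty$, and it is not clear that this map is continuous for any topology on the second factor for which one has a clean Arveson spectral theory. Your proposed workaround via vector-valued tempered distributions would, if carried out, essentially reprove the abstract bilinear spectral translation lemma (\cite[Prop.~A.14]{Ne13}, resp.\ Proposition~\ref{spectranslformgen}) in this special case; you do not actually do this.

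The paper sidesteps the domain/continuity problem by replacing $\cH$ in the second slot with the convolution algebra itself. Fixing $v\in\cH^\infty$, it considers the bilinear map
\[
\beta:\g_\C\times L^1(\R)\to\cH,\qquad (y,f)\mapsto \dd\pi(y)\,U_f v,
\]
which is manifestly continuous (the equicontinuity of $\alpha$ and the continuity of $y\mapsto\dd\pi(y)v$ give a uniform seminorm bound via~\eqref{eq:duti}) and $\R$-equivariant. On the second factor one passes to the quotient $\cA=L^1(\R)/L^1(\R)_v$, where the left regular representation has spectrum contained in $F=\Spec_U(v)$ for every element. Now the off-the-shelf bilinear translation lemma applies directly and gives $\Spec_U(\dd\pi(y)U_f v)\subeq\oline{E+F}$ for $y\in\g_\C(E)$. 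A final $\delta$-sequence argument ($U_{\delta_n}v\to v$, using~\eqref{eq:duti} again) recovers the statement for $\dd\pi(y)v$ itself. Part~(ii) is the same with $\cS(\R)$ in place of $L^1(\R)$ and Proposition~\ref{spectranslformgen} in place of \cite[Prop.~A.14]{Ne13}. So the genuine content you are missing is this choice of bilinear map, which converts your informal ``$t\mapsto U_t v$ has Fourier support in $F$'' into a rigorous spectral statement on a complete space where the abstract machinery applies.
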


\begin{prf} (i) Assume that $\alpha$ is equicontinuous.
From \eqref{eq:duti} we recall for $v \in \cH^\infty$ and $f \in L^1(\R)$ the relation 
\begin{equation}
  \label{eq:1}\dd\pi(y) U_f v = \int_\R f(t) U_t\dd\pi(\alpha_{-t}(y))v\, dt.\end{equation}
Fix $v \in \cH^\infty$ and consider the bilinear map
\[ \beta \: \g_\C \times L^1(\R) \to  \cH, \quad 
(y, f) \mapsto \dd\pi(y) U_fv. \] 
Since the map $\g_\C \to \cH, y \mapsto \dd\pi(y)v$ is continuous and $\alpha_\R\subset \End(\g_\C)$ is equicontinuous, there is a continuous seminorm $q$ on $\g_\C$ with $\|\dd\pi(\alpha_t(y))v\|\leq q(y)$ for all $y\in\g_\C, t\in\R$. 
Now let $x\in\g_\C$ and $f\in L^1(\R)$.
Then 
by \eqref{eq:1} we obtain $\|\beta(x,f)\|\leq \|f\|_{L^1(\R)}q(x)$, so that $\beta$ is continuous by Proposition
\ref{multilinprop}.
Since the integrated representation $U$ of $L^1(\R)$ on $\cH$ is continuous, 
the annihilator ideal 
\[ L^1(\R)_v := \{ f \in L^1(\R) \: U_f v =0\} \] 
is closed and therefore translation invariant. 
Note that it is a two-sided ideal because $L^1(\R)$ is commutative. 
It follows that the left regular representation of $\R$ on $L^1(\R)$ defined by $\lambda_tf(t'):=f(t'-t)$, for $t,t'\in\R$ and $f\in L^1(\R)$, 
factors to a continuous and equicontinuous representation of $\R$ on the 
Banach space $\cA := L^1(\R)/L^1(\R)_v$. Write $\oline f$ for the image of 
$f \in L^1(\R)$ in $\cA$. Then the corresponding 
integrated representation of $L^1(\R)$ on $\cA$ is 
$\lambda_f\oline{h}:=\oline{f*h}$, where $f,h\in L^1(\R)$.
For every $\oline h\in \cA$,
we
consider $\Spec_{\lambda}(\oline h)$, see Definition~\ref{arvspecdef}(b).
 
Set $F := \Spec_U(v)$. For $f \in L^1(\R)$ and $h\in L^1(\R)_v$, 
commutativity of $L^1(\R)$ implies $f*h = h * f\in L^1(\R)_v$, so that 
$\lambda_h \oline{f}=\oline{h*f} = 0$. 
It follows that $\Spec_\lambda(\oline f) \subeq F$
for every $\oline f\in\cA$.

Now fix $E\subeq \R$. As $\beta$ is continuous, \cite[Prop.~A.14]{Ne13} implies that for every
$y \in \g_\C(E)$, we have 
\[ \Spec_U(\dd\pi(y) U_f v) \subeq \oline{E + F} \quad \mbox{ 
for } \quad f \in L^1(\R).\] 
Next we observe that \eqref{eq:1} implies that, for any 
$\delta$-sequence $\delta_n$ in $L^1(\R)$, we have 
\[ \dd\pi(y) U_{\delta_n}v \to \dd\pi(y) v.\] 
Since $\cH(\oline{E +F})$ is a closed subspace of $\cH$, we obtain 
$\Spec_U(\dd\pi(y)v) \subeq \oline{E + F}$ 
for every $y \in \g_\C(E)$.

(ii) Now assume that $\alpha$ is polynomially bounded. Let $v \in \cH^\infty$. 
Then the bilinear map 
\[ \beta \: \g_\C \times \cS(\R) \to  \cH, \quad 
(y, f) \mapsto \dd\pi(y) U_fv \] 
is continuous, which follows from a similar argument as in (i).
From the continuity of the inclusion $\cS(\R)\into L^1(\R)$ and the closedness 
of $L^1(\R)_v$, it follows that the annihilator ideal 
$\cS(\R)_v:=\{f\in \cS(\R)\,:\,U_fv=0\}$
is closed in $\cS(\R)$. 
Now let $\lambda$ be the left regular representation of $\R$ on $
\cS(\R)$ defined by
$\lambda_tf(t'):=f(t'-t)$ for $t,t'\in\R$ and $f\in \cS(\R)$. 
From the relation $U_tU_fv=U_{\lambda_tf}v$ for $t\in\R $ and $f\in\cS(\R)$, it
follows that $\cS(\R)_v$ is translation invariant. The argument given in (i)  for the case of $L^1(\R)$  can be adapted to show that 
$\Spec_\lambda(\oline f,\cS)\subseteq \Spec_U(v;\cS)=\Spec_U(v)$ for 
every $\oline f:=f+\cS(\R)_v$, where $f\in\cS(\R)$ (cf.~Lemma \ref{lemequalspectra}). 
Since $\beta$ is continuous, we can now apply Proposition \ref{spectranslformgen} 
and complete the proof as in (i).
\end{prf}

\begin{prop}
Let $\gamma$ and $\alpha$ be as in Theorem \ref{smoothpropgen}, $\pi^\#(g,t) = \pi(g) U_t$ be a continuous unitary representation 
of $G\rtimes_\gamma\R$ on $\cH$ and $\H^\infty$ be the space of smooth vectors 
with respect to $\pi$. Let $B$ denote the self-adjoint generator of $U_t$ and $P_B$ its spectral measure. We have $P_B(E)\H=\H(E)$ for every closed subset $E\subset \R$. If $\alpha$ is polynomially bounded and $\pi$ is smooth, then $\H^\infty \cap P_B(E)\H$ is dense in $P_B(E)\H$ for every open subset $E\subset \R$.
\end{prop}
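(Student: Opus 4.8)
The plan is to handle the two claims separately, using throughout the functional calculus of $B$: for $v\in\H$ write $\mu_v:=\langle P_B(\cdot)v,v\rangle$ for the associated finite positive Borel measure on $\R$, so that $\|g(B)v\|^2=\int_\R|g|^2\,d\mu_v$ for bounded Borel $g$, and recall from Remark~\ref{smoothpropgenrmk} that $U_f=\hat f(B)$ for $f\in L^1(\R)$, where $\hat f(s)=\int_\R f(t)e^{ist}\,dt$.

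For the equality $P_B(E)\H=\H(E)$ with $E$ closed I would argue by double inclusion, translating the $U_f$-description of the Arveson subspace (Definition~\ref{arvspecdef}(b)) into a statement about $\mu_v$. If $v\in P_B(E)\H$, then $\mu_v$ is concentrated on $E$, so $U_f v=\hat f(B)v=0$ whenever $\hat f$ vanishes on a neighbourhood of $E$; hence $\Spec_U(v)\subeq E$, i.e.\ $v\in\H(E)$. Conversely, given $v\in\H(E)$ and a compact $K\subeq\R\setminus E$, the closedness of $E$ lets me pick $g\in C_c^\infty(\R)$ with $g\equiv1$ on $K$ and $\supp g\cap E=\eset$; writing $g=\hat f$ (so $f\in\cS(\R)\subeq L^1(\R)$) and using that $\hat f$ vanishes in a neighbourhood of $E$ together with $v\in\H(E)$ (the defining property of the Arveson spectral subspace, cf.\ \cite[App.~A]{Ne13}), I obtain $U_fv=0$, hence $\mu_v(K)\le\|\hat f(B)v\|^2=0$. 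Exhausting $\R\setminus E$ by an increasing sequence of compact sets gives $\mu_v(\R\setminus E)=0$, i.e.\ $P_B(E)v=v$. This first part is essentially bookkeeping between the two descriptions of the spectral subspace.

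For the density statement I would smooth. Fix an open $E\subeq\R$, a vector $v\in P_B(E)\H$ and $\eps>0$. Since $\mu_v$ is finite and concentrated on the open set $E$, inner regularity produces a compact $K\subeq E$ with $\|v-P_B(K)v\|^2=\mu_v(E\setminus K)<\eps^2$. Choose an open $V$ with $K\subeq V\subeq\oline V\subeq E$ and $\oline V$ compact, and $g\in C_c^\infty(\R)$ with $0\le g\le1$, $g\equiv1$ on $K$, $\supp g\subeq V$; put $g=\hat f$. Then $f\in\cS(\R)$ (the Fourier transform maps $\cS(\R)$ onto itself and $C_c^\infty(\R)\subeq\cS(\R)$), and three properties combine: (1) $U_f=\hat f(B)$ has operator norm $\le1$; (2) $U_f\H^\infty\subeq\H^\infty$ by Theorem~\ref{smoothpropgen}(b), since $\alpha$ is polynomially bounded and $f\in\cS(\R)$; and (3) $U_f\H\subeq P_B(E)\H$, because $d\mu_{U_fw}=|\hat f|^2\,d\mu_w$ is carried by $\{\hat f\neq0\}\subeq\supp g\subeq E$, so $P_B(E)U_fw=U_fw$. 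Since $g\equiv1$ on $K$ one has $\hat f(B)P_B(K)=P_B(K)$, whence $\|U_fv-v\|\le\|U_f\|\,\|v-P_B(K)v\|+\|P_B(K)v-v\|<2\eps$. Finally, using that $\pi$ is smooth (so $\H^\infty$ is dense in $\H$), pick $w\in\H^\infty$ with $\|w-v\|<\eps$; then $U_fw\in\H^\infty\cap P_B(E)\H$ and $\|U_fw-v\|\le\|U_f\|\,\|w-v\|+\|U_fv-v\|<3\eps$. Letting $\eps\to0$ yields the density of $\H^\infty\cap P_B(E)\H$ in the closed subspace $P_B(E)\H$.

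The only step where the hypotheses are genuinely used — and the one to be careful with rather than a real obstacle — is (2): one needs $f\in\cS(\R)$, which is why $g=\hat f$ is taken smooth and compactly supported, together with $\alpha$ polynomially bounded, precisely so that Theorem~\ref{smoothpropgen}(b) keeps the smoothed vector $U_fw$ inside $\H^\infty$. Everything else is elementary spectral theory of the unitary one-parameter group $U$; the substantive input is just the invariance theorem proved above.
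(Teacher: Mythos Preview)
Your proof is correct and follows essentially the same strategy as the paper's. For the first claim the paper simply says it is ``easy to verify'', so your spelled-out double-inclusion argument via the spectral measures $\mu_v$ is a welcome elaboration. For the density claim both you and the paper approximate $v\in P_B(E)\H$ by $g(B)w$ with $g\in C_c^\infty(\R)$ supported in $E$ and $w\in\H^\infty$, invoking Theorem~\ref{smoothpropgen}(b) (via Remark~\ref{smoothpropgenrmk}) to ensure $g(B)w\in\H^\infty$; the only cosmetic difference is that the paper uses an exhausting sequence $K_n\subeq E$ and cites \cite[Thm.~VIII.5(d)]{RS80} for $f_n(B)v\to v$, whereas you pick a single compact $K$ by inner regularity of $\mu_v$ and estimate directly --- the two arguments are interchangeable.
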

\begin{proof}
It is easy to verify that $P_B(E)\H=\H(E)$ holds for every closed subset $E\subset \R$. Now let $E\subset \R$ be open. Choose compact subsets $K_n\subset E, n\in\N$, with $K_n\subset K_{n+1}$ and $E=\bigcup_n K_n$. Let $v\in P_B(E)\H$ and $\eps>0$. By the smooth Urysohn Lemma we may choose compactly supported smooth functions $f_n$ with $\supp(f_n) \subset E, \|f_n\|_\infty \leq 1$ and $f_n=1$ on $K_n$. By \cite[Thm.~VIII.5(d)]{RS80} we have $f_n(B)v\rightarrow v=P_B(E)v $. Choose $v'\in \H^\infty$ with $\|v'-v\|<\eps$. Then
$$\|f_n(B)v'-v\| \leq \|f_n(B)v'-f_n(B)v\|+\|f_n(B)v-v\| \leq \|v'-v\|+\|f_n(B)v-v\| < \eps$$
for $n$ large enough. As $f_n(B)v'\in \H^\infty \cap P_B(E)\H$ by Remark \ref{smoothpropgenrmk}, the assertion follows.
\end{proof}

\appendix
\section{Appendix}

\subsection{Continuous mappings between locally convex spaces}

\begin{dfn}\label{equicontdef}
Let $E$ and $F$ be a locally convex spaces. We denote by $\Hom(E,F)$ the space of continuous linear maps from $E$ to $F$ and write $\End(E) := \Hom(E,E)$. 
A subset $Y \subset \Hom(E,F)$ is called \textit{equicontinuous} if for every open $0$-neighborhood $U$ in $F$ there exists a $0$-neighborhood $W$ in $E$ such 
that $T(W) \subset U$ holds for every $T\in Y$.
\end{dfn}

\begin{prop}\label{equicontprop} {\rm(\cite[II.1.4 Prop.4]{Bo87})} 
For $Y \subset \Hom(E,F)$ the following conditions are equivalent:
\begin{itemize}
\item[\rm(a)] $Y$ is equicontinuous.
\item[\rm(b)] For every continuous seminorm $p$ on $F$ there exists a continuous seminorm $q$ on $E$ such that 
$p(Tx)\leq q(x)$
holds for all $T\in Y$ and $x\in E$.
\end{itemize}
\end{prop}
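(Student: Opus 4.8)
The plan is to establish the two implications separately, using throughout the basic fact that in a locally convex space every open $0$-neighborhood contains a set of the form $\{v : r(v) < 1\}$ for some continuous seminorm $r$ (the gauge of a convex balanced open neighborhood contained in the given one).

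For (a) $\Rightarrow$ (b), I would fix a continuous seminorm $p$ on $F$ and note that $\{y \in F : p(y) < 1\}$ is an open $0$-neighborhood; equicontinuity of $Y$ then provides a $0$-neighborhood in $E$, which may be shrunk to $W = \{x \in E : q(x) < 1\}$ for a suitable continuous seminorm $q$ on $E$, with $T(W) \subset \{p < 1\}$ for all $T \in Y$. The remaining task is to upgrade this inclusion to the pointwise estimate $p(Tx) \le q(x)$, and this is the one genuinely analytic step. It is handled by the usual homogeneity argument: if $q(x) > 0$, then $\lambda^{-1}x \in W$ for every $\lambda > q(x)$, so $p(Tx) = \lambda\, p(T(\lambda^{-1}x)) < \lambda$, and letting $\lambda \downarrow q(x)$ yields $p(Tx) \le q(x)$; if $q(x) = 0$, the same computation with arbitrary $\lambda > 0$ forces $p(Tx) = 0 = q(x)$. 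As $T \in Y$ and $x \in E$ were arbitrary, this gives (b).

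For (b) $\Rightarrow$ (a), I would take an open $0$-neighborhood $V$ in $F$, choose a continuous seminorm $p$ on $F$ and $\varepsilon > 0$ with $\{p < \varepsilon\} \subset V$, and apply (b) to $p$ to obtain a continuous seminorm $q$ on $E$ with $p(Tx) \le q(x)$ for all $T \in Y$, $x \in E$. Then $W := \{x \in E : q(x) < \varepsilon\}$ is an open $0$-neighborhood in $E$ and $T(W) \subset \{p < \varepsilon\} \subset V$ for every $T \in Y$, so $Y$ is equicontinuous.

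I expect the transition in (a) $\Rightarrow$ (b) from the neighborhood inclusion $T(W) \subset \{p < 1\}$ to the seminorm inequality $p(Tx) \le q(x)$ to be the only point requiring any care; everything else follows directly from the definitions of equicontinuity and of the locally convex topology, and the whole statement is recorded in \cite[II.1.4, Prop.~4]{Bo87}.
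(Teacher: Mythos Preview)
Your proof is correct and is the standard argument. Note, however, that the paper does not actually supply a proof of this proposition: it is stated with a citation to \cite[II.1.4, Prop.~4]{Bo87} and used as a black box, so there is no ``paper's own proof'' to compare against. Your write-up could serve as a self-contained justification if one were desired.
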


\begin{prop}\label{multilinprop} {\rm(\cite[II.1.4 Prop.4]{Bo87})} 
Let $m:E^n\rightarrow F$ be an $n$-linear map. Then $m$ is continuous if and only if for every continuous seminorm $p$ on $F$ there exists a continuous seminorm $q$ on $E$ such that 
\[ p(m(x_1,\dots,x_n))\leq q(x_1)\cdots q(x_n)\] 
holds for all $x_1,\dots,x_n\in E$.
\end{prop}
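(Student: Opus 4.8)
The plan is to establish the two implications separately, exploiting only the seminorm description of the locally convex topologies together with the homogeneity built into $n$-linearity.

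For the direction $(\Leftarrow)$, assume that for every continuous seminorm $p$ on $F$ there is a continuous seminorm $q_p$ on $E$ with $p(m(x_1,\dots,x_n)) \le q_p(x_1)\cdots q_p(x_n)$ for all $x_i \in E$. This estimate at once gives continuity of $m$ at the origin of $E^n$: for a basic $0$-neighbourhood $\bigcap_{j=1}^{k}\{y : p_j(y) < \varepsilon\}$ in $F$, set $q := \max_j q_{p_j}$, again a continuous seminorm on $E$, and observe that the product neighbourhood $\{x \in E : q(x) < \varepsilon^{1/n}\}^{n}$ of $0$ in $E^n$ is carried into it. To pass from the origin to an arbitrary point $a=(a_1,\dots,a_n) \in E^n$, I would invoke the standard multilinear expansion
\[
m(a_1+h_1,\dots,a_n+h_n) - m(a_1,\dots,a_n) = \sum_{\emptyset \neq S \subseteq \{1,\dots,n\}} m\big(z_1^S,\dots,z_n^S\big),
\]
where $z_i^S := h_i$ if $i \in S$ and $z_i^S := a_i$ otherwise. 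For a continuous seminorm $p$ on $F$ the $S$-summand is bounded by $\big(\prod_{i \notin S} q_p(a_i)\big)\prod_{i \in S} q_p(h_i)$; since each nonempty $S$ contributes at least one factor $q_p(h_i)$ while the finitely many numbers $q_p(a_i)$ are fixed, the whole sum becomes $p$-small once all the $q_p(h_i)$ are small, which proves continuity of $m$ at $a$ and hence everywhere. (One could equally well quote the general principle that a multilinear map is continuous as soon as it is continuous at $0$.)

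For the direction $(\Rightarrow)$, fix a continuous seminorm $p$ on $F$. Since $\{y : p(y) < 1\}$ is a $0$-neighbourhood in $F$ and $m$ is continuous at $0 \in E^n$, there is a basic product $0$-neighbourhood of $E^n$ mapped into it; intersecting its $n$ factors and taking the maximum of the finitely many seminorms involved, I may fix a single continuous seminorm $q_0$ on $E$ and a $\delta > 0$ such that $q_0(x_i) \le \delta$ for all $i$ forces $p(m(x_1,\dots,x_n)) \le 1$. The crucial step is then a rescaling argument: if all $q_0(x_i) > 0$, replace each $x_i$ by $\delta q_0(x_i)^{-1} x_i$, which has $q_0$-value exactly $\delta$, and use $n$-linearity to pull out the scalars, obtaining $p(m(x_1,\dots,x_n)) \le \delta^{-n} q_0(x_1)\cdots q_0(x_n)$; the continuous seminorm $q := \delta^{-1} q_0$ then satisfies $p(m(x_1,\dots,x_n)) \le q(x_1)\cdots q(x_n)$ whenever every $q_0(x_i)$ is positive.

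The only point requiring a little extra care — and the only place where I would expect any friction — is the degenerate case in which some $q_0(x_i) = 0$. Here I would argue by a limiting trick: if, say, $q_0(x_1) = 0$, then $q_0(t x_1) = 0 \le \delta$ for every $t > 0$, and after rescaling $x_2,\dots,x_n$ by suitable positive constants to bring their $q_0$-values below $\delta$, the bound just obtained yields $t\,c\,p(m(x_1,\dots,x_n)) \le 1$ for all $t>0$ with some fixed $c > 0$, whence $p(m(x_1,\dots,x_n)) = 0$; since $q(x_1)\cdots q(x_n)$ also vanishes in this case, the required inequality holds trivially. Apart from this case distinction the proof is routine manipulation with continuous seminorms, so I anticipate no real obstacle.
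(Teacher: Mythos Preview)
Your argument is correct. The paper itself does not supply a proof of this proposition; it merely quotes the result from Bourbaki's \emph{Topological Vector Spaces} \cite[II.1.4 Prop.~4]{Bo87}, so there is no in-house proof to compare against. Your treatment is the standard one: continuity at the origin via the seminorm bound, propagation to arbitrary points by the multilinear telescoping identity, and in the converse direction the rescaling trick $x_i \mapsto \delta\,q_0(x_i)^{-1}x_i$ together with the limiting argument for the degenerate case $q_0(x_i)=0$. This is essentially how Bourbaki argues as well (he phrases the converse via absorbing $0$-neighbourhoods rather than seminorms directly, but the content is identical), so your write-up would serve perfectly well as a self-contained replacement for the citation.
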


\subsection{Differentiation under the integral sign}

\begin{lem}\label{parameterintegrallem}
Let $( \Omega,\Sigma,\mu)$ be a measure space, 
$E$ be a locally convex space and $W\subset E$ an open subset. Let 
$f: \Omega\times W \rightarrow \C$ be a map such that $f_t := f(t,\cdot) \in C^\infty(W,\C)$ for all $t \in \Omega$ and $f(\cdot,x) \in L^1(\Omega,\mu)$ for all $x \in W$.
Assume that, for every $x_0\in W$ and $k \in \N$, there exist open subsets $U_{x_0,k}$ of $W$ and $V_{x_0,k}$ of $E$ with $x_0\in U_{x_0,k}$ and $0\in V_{x_0,k}$ and a function $g_{x_0,k}\in L^1(\Omega,\mu)$ such that 
\begin{align}\label{parameterintegrallemeq1}
\sup \left\{ \, |\dd^kf_t(x)(h_1',\dots,h_k')| : x\in U_{x_0,k}, h_1',\dots,h_k'\in V_{x_0,k}\, \right\} \leq g_{x_0,k}(t)
\end{align}
for all $t\in\Omega$.
Then $F(\cdot):= \int_\Omega f(t,\cdot)d\mu(t)$ defines a smooth function on $W$ with derivatives given by 
$$\dd^k F(x)(h_1,\dots,h_k) = \int_\Omega \dd^kf_t(x)(h_1,\dots,h_k)d\mu(t).$$
\end{lem}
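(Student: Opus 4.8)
The plan is to work inside the Michal--Bastiani calculus of \cite{Ne06}, in which $F$ being $C^\infty$ means that all iterated directional derivatives
\[
\dd^k F(x)(h_1,\dots,h_k):=\lim_{s\to0}\tfrac1s\big(\dd^{k-1}F(x+sh_k)(h_1,\dots,h_{k-1})-\dd^{k-1}F(x)(h_1,\dots,h_{k-1})\big)
\]
exist and define continuous maps $W\times E^k\to\C$, and in which a map is $C^k$ precisely when it is $C^1$ with $C^{k-1}$ differential. I would first shrink every $V_{x_0,k}$ so that it is balanced, and then record the \emph{homogeneity trick}: since $\dd^kf_t(x)$ is $k$-linear, for fixed $h_1,\dots,h_k\in E$ one chooses $\lambda>0$ with $\lambda h_i\in V_{x_0,k}$ and deduces from \eqref{parameterintegrallemeq1} a pointwise domination $|\dd^kf_t(x)(h_1,\dots,h_k)|\le\lambda^{-k}g_{x_0,k}(t)$ valid for all $x\in U_{x_0,k}$ and all $t\in\Omega$. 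Together with measurability of $t\mapsto\dd^kf_t(x)(h_1,\dots,h_k)$ (a pointwise limit of difference quotients of the measurable functions $f(\cdot,y)$), this shows the integrals on the right-hand side of the asserted formula are well defined.

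The argument then rests on two sub-results, each obtained by combining the fundamental theorem of calculus for $C^1$ maps, $f_t(x+h)-f_t(x)=\int_0^1\dd f_t(x+\tau h)(h)\,d\tau$ (legitimate since the target $\C$ is complete), with Lebesgue's dominated convergence theorem applied in the $\Omega$-variable. First, for continuity of $F$: writing a point near $x_0$ as $x_0+\lambda h$ with $h\in V_{x_0,1}$ and $\lambda\in(0,1]$, the fundamental theorem and the homogeneity trick give $|F(x_0+\lambda h)-F(x_0)|\le\lambda\|g_{x_0,1}\|_{L^1}$ uniformly in $h\in V_{x_0,1}$, so $x_0+\lambda V_{x_0,1}$ is the required neighborhood, which yields genuine (not merely sequential) continuity. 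Second, for the first-order formula: one writes $\tfrac1s\big(F(x+sh)-F(x)\big)=\int_\Omega\int_0^1\dd f_t(x+\tau sh)(h)\,d\tau\,d\mu(t)$; for each $t$ the inner integral tends to $\dd f_t(x)(h)$ as $s\to0$ by continuity of $\dd f_t$, the integrand is dominated by $\lambda^{-1}g_{x,1}(t)$, and dominated convergence yields $\dd F(x)(h)=\int_\Omega\dd f_t(x)(h)\,d\mu(t)$.

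Finally I would bootstrap. The map $\tilde f:=\dd f\colon\Omega\times(W\times E)\to\C$, $\tilde f_t(x,h):=\dd f_t(x)(h)$, again satisfies all the hypotheses of the lemma: $\tilde f_t$ is smooth, $\tilde f(\cdot,(x,h))\in L^1$ by the homogeneity trick, and each $\dd^j\tilde f_t$ is, by the product rule and linearity in the $h$-slot, a finite sum of terms $\dd^{j+1}f_t$ and $\dd^{j}f_t$ evaluated at the prescribed directions, hence dominated on a suitable neighborhood by the hypotheses for $f$ at orders $j$ and $j+1$. Applying the two sub-results to $\tilde f$ shows $\dd F=\int_\Omega\tilde f_t\,d\mu$ exists and is continuous, so $F$ is $C^1$; and since $\tilde f$ satisfies the same hypotheses as $f$, induction on $k$ shows $\dd F$ is $C^{k-1}$ for all $k$, i.e.\ $F$ is $C^\infty$, the order-$k$ formula following by iterating the first-order identity. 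The step I expect to be the main obstacle is this pair of dominated-convergence arguments: converting the neighborhood-wise sup-bounds of \eqref{parameterintegrallemeq1} into honest $L^1$-dominations with the direction vectors held fixed (the homogeneity trick), and thereby securing both the limit interchange and \emph{topological} rather than merely sequential continuity in the locally convex setting, where metric intuition is unavailable.
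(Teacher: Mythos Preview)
Your proof is correct and shares its core with the paper's: both establish the first-order formula by writing the difference quotient via the fundamental theorem of calculus and then passing to the limit under the integral by dominated convergence, with the scaling/homogeneity trick converting the neighborhood bound \eqref{parameterintegrallemeq1} into an honest $L^1$ majorant for fixed direction vectors. Your identification of this step as the crux is exactly right.

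The one structural difference is in the bootstrap. The paper does not lift to $W\times E$; instead, after proving the $C^1$ case it shows continuity of $\dd F$ directly by an explicit $\eps$--$\delta$ estimate splitting $|\dd F(x)(h)-\dd F(x_0)(h_1)|$ into a term controlled by $g_{x_0,1}$ and a term controlled by $g_{x_0,2}$ (via a second application of the fundamental theorem of calculus in the $x$-variable). Its induction then freezes the direction vectors and applies the $C^1$ case to $x\mapsto \dd^kF(x)(h_1,\dots,h_k)$. Your route---checking that $\tilde f_t(x,h)=\dd f_t(x)(h)$ again satisfies the hypotheses on $W\times E$ and then invoking the already-proved continuity and first-order results---is a legitimate and slightly more modular alternative; it trades the paper's explicit two-term continuity estimate for the small bookkeeping of verifying the hypotheses for $\tilde f$ (which, as you note, reduces to bounding $\dd^{j+1}f_t$ and $\dd^{j}f_t$ using the homogeneity trick on the fixed base vector $h_0$). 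One cosmetic point: when you write ``$x_0+\lambda V_{x_0,1}$ is the required neighborhood'' you should also have shrunk $V_{x_0,1}$ so that $x_0+V_{x_0,1}\subeq U_{x_0,1}$, ensuring the segment $x_0+[0,\lambda]h$ stays inside $U_{x_0,1}$; the paper handles this by assuming $U_{x_0,1}$ convex.
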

\begin{prf}
We first show that $F$ is $C^1$. Let $x_0\in W, h_1\in E$ and $U_{x_0,1}, V_{x_0,1}$ with the stated properties, where we assume without loss of generality that $U_{x_0,1}$ is convex. Since $\dd f_t(x)(h)$ is linear in $h$ we may (by scaling of $V_{x_0,1}$) further assume that $h_1\in V_{x_0,1}$. Let $t_n \rightarrow 0 $ with $x_0+t_n h_1 \in U_{x_0,1}$ for all $n$. Then we estimate:
$$\left|\frac{F(x_0+t_nh_1)-F(x_0)}{t_n} - \int_\Omega \dd f_t(x_0)(h_1)d\mu(t)\right| \leq \int_\Omega h_n(t) d\mu(t),$$
where $h_n(t) := \left|\frac{f_t(x_0+t_nh_1)-f_t(x_0)}{t_n}-\dd f_t(x_0)(h_1)\right|$.  Equation \eqref{parameterintegrallemeq1} yields
$$h_n(t) =  \left|\int_0^1 \dd f_t(x_0+st_nh_1)(h_1)-\dd f_t(x_0)(h_1) ds\right| \leq 2 g_{x_0,1}(t).$$
As $h_n(t)\rightarrow 0$ for all $t\in \Omega$, the Dominated Convergence Theorem entails $\int_\Omega h_n(t) d\mu(t) \rightarrow0$. Thus $F$ is differentiable and $\dd F(x)(h)=\int_\Omega \dd f_t(x)(h)d\mu(t)$. Now let $x_0\in W, h_1\in E$ and $U_{x_0,1},U_{x_0,2}, V_{x_0,1},V_{x_0,2}$ with the stated properties, where we assume without loss of generality that $U:=U_{x_0,1}\cap U_{x_0,2}$ is convex and $V:=V_{x_0,1}\cap V_{x_0,2}$ is balanced. By scaling of $V_{x_0,1}$ and $V_{x_0,2}$, we may again assume that $h_1\in V$. Let $\eps>0$ and set 
$$\delta:=\frac{\eps}{\left(1+\int_\Omega g_{x_0,1} d\mu(t)+\int_\Omega g_{x_0,2} d\mu(t)\right)}.$$
For $x\in U\cap (x_0+\delta \cdot V), h\in h_1+\delta\cdot V$ we then have:
\begin{align*}
&|\dd F(x)(h)-\dd F(x_0)(h_1)| \leq \int_\Omega |\dd f_t(x)(h-h_1)|d\mu(t) + \int_\Omega|\dd f_t(x)(h_1)-\dd f_t(x_0)(h_1)|d\mu(t)\\
& \leq \delta\int_\Omega|\dd f_t(x)(\delta^{-1}(h-h_1))|d\mu(t) + \delta\int_\Omega \int_0^1 |\dd^2f_t(x_0+s(x-x_0))(\delta^{-1}(x-x_0))(h_1)|dsd\mu(t) \\
&\leq \delta\int_\Omega g_{x_0,1}(t)d\mu(t) + \delta\int_\Omega g_{x_0,2}(t) d\mu(t) <\eps.
\end{align*}
Since $U\cap (x_0+\delta \cdot V)$ is an open $x_0$-neighborhood and $h_1+\delta\cdot V$ is an open $h_1$-neighborhood, we conclude that $\dd F$ is continuous. Hence $F$ is continuously differentiable and therefore $C^1$.\newline

We now argue by induction on $k$ and assume that $F$ is $C^k, k\geq1$, with derivatives as stated. We must show that $\dd^kF$ is $C^1$ with the appropriate derivative. Applying the 
$C^1$-case to $\dd^kF(\cdot)(h_1,\dots,h_k)$ for fixed $h_1,\dots,h_k$ yields that $\dd^kF(\cdot)(h_1,\dots,h_k)$ is differentiable with derivative 
\[ 
\dd^{k+1}F(x)(h_1,\ldots,h_{k+1})=
\dd\big(\dd^kF(\cdot)(h_1,\dots,h_k)\big)(x)(h_{k+1})=\int_\Omega \dd^{k+1}f_t(x)(h_1,\dots,h_{k+1})d\mu(t).\]
This map is continuous in $(x,h_1,\dots,h_{k+1})$, which may be shown by an analogous argument as for the $C^1$-case using inequality 
\eqref{parameterintegrallemeq1}. 
From here we conclude that $\dd^kF$ is $C^1$.
\end{prf}

\subsection{Arveson spectral theory for polynomially bounded actions}\label{arvesonapp}
Let $V$ be a complete complex locally convex space and let $\alpha:\R\rightarrow \GL(V)$, $t\mapsto \alpha_t$ be a strongly continuous representation.
Assume that $\alpha$ is polynomially bounded (Definition \ref{eqpolybactionsdef}(b)). 
\begin{dfn}\label{arvspecdef}
(a) We define
\begin{equation}\label{intAfS}
\alpha_f(v) := \int_\R f(t) \alpha_t(v)\, dt 
\quad \mbox{ for }\quad v \in V, f \in \cS(\R).
\end{equation}
Then $\alpha_f\in \End(V)$ and this yields a representation of convolution algebra $(\cS(\R),*)$ on $V$. We define the \emph{spectrum} of an element $v\in V$
by 
\begin{equation*}
\Spec_\alpha(v;\cS) := \{ y \in \R \: 
(\forall f \in \cS(\R))\, \alpha_f v = 0 \Rarrow \hat f(y) = 0\}
\end{equation*}
which is the hull of the annihilator ideal of $v$.
For a subset $E \subeq \R$, we now define 
the corresponding {\it Arveson spectral subspace} 
\begin{equation*}
 V(E;\cS) := \{ v \in V\: \Spec_\alpha(v;\cS) \subeq \oline E \}.\end{equation*}

(b) If $\alpha$ is equicontinuous, then \eqref{intAfS} exists for all $f \in L^1(\R)$ and we can define $\Spec_{\alpha}(v)$ and $V(E)$ as above with by $\cS(\R)$ replaced by $L^1(\R)$, see \cite[Def.~A.5(b)]{Ne13}.
\end{dfn}

We now want to transfer some results of \cite[App.~A.2]{Ne13} to the case when $\alpha$ is polynomially bounded. We first need a technical lemma.

\begin{dfn}
For an ideal $I\subseteq \cS(\R)$ we define its hull by
$$h(I) := \{x\in \R : \hat f(x)=0\text{ for all } f \in I\},$$
and for a subset $E\subseteq \R$ we define
$$I_0(E) := \{f\in \cS(\R) : \supp(\hat f)\cap E =\emptyset \}$$
which is an ideal in $\cS(\R)$.
\end{dfn}

\begin{lem}\label{idealhulllemma}
\begin{itemize}
\item[\rm(a)] $h(I_0(E))=E$ for $E\subseteq \R$ closed.
\item[\rm(b)] $I_0(h(I)) \subseteq I$ for every closed ideal $I\subseteq \cS(\R)$.
\end{itemize}
\end{lem}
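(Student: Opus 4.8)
The plan is to transfer everything to the Fourier side, where the convolution product of $\cS(\R)$ becomes pointwise multiplication. Recall from Remark~\ref{smoothpropgenrmk} that $\cF \: \cS(\R) \to \cS(\R)$, $f \mapsto \hat f$, is a linear topological isomorphism and that $\widehat{f*h} = \hat f\,\hat h$. Consequently, for a closed ideal $I \subeq (\cS(\R),*)$ the image $J := \cF(I)$ is a closed ideal of the pointwise‑multiplication algebra $(\cS(\R),\cdot)$, its hull $h(I)$ coincides with the common zero set $Z(J) := \{x \in \R : g(x) = 0 \text{ for all } g \in J\}$, and $\cF\big(I_0(E)\big) = \{g \in \cS(\R) : \supp g \cap E = \eset\} =: J_0(E)$. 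Under this dictionary, (a) becomes the assertion $Z(J_0(E)) = E$ for $E \subeq \R$ closed, and (b) becomes $J_0(Z(J)) \subeq J$ for every closed ideal $J$ of $(\cS(\R),\cdot)$.

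For (a) the inclusion $E \subeq Z(J_0(E))$ is immediate: if $x \in E$ and $g \in J_0(E)$, then $x \notin \supp g$, so $g(x) = 0$. For the reverse inclusion I would use, given $x \notin E$, that $\R \setminus E$ is open to choose $\varphi \in C_c^\infty(\R \setminus E) \subeq \cS(\R)$ with $\varphi(x) = 1$; then $\varphi \in J_0(E)$ while $\varphi(x) \neq 0$, so $x \notin Z(J_0(E))$.

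The substance is (b). Let $g \in J_0(Z(J))$, so $\supp g \cap Z(J) = \eset$. First I would reduce to the case of compactly supported $g$: pick $\chi_n \in C_c^\infty(\R)$ with $\chi_n \equiv 1$ on $[-n,n]$, $\supp \chi_n \subeq [-n-1,n+1]$, and all derivatives bounded uniformly in $n$ (e.g. $\chi_n(x) = \eta(|x|-n)$ for a fixed $\eta\in C^\infty(\R)$ with $\eta \equiv 1$ on $(-\infty,0]$ and $\eta \equiv 0$ on $[1,\infty)$); a routine Leibniz‑rule estimate then gives $\chi_n g \to g$ in $\cS(\R)$, and since $\supp(\chi_n g) \subeq \supp g$ stays disjoint from $Z(J)$ and $J$ is closed, it suffices to show $\chi_n g \in J$. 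So assume $K := \supp g$ is compact with $\delta := \dist(K, Z(J)) > 0$ (positive since $Z(J)$ is closed). Put $K' := \{x : \dist(x,K) \le \delta/2\}$, a compact set with $\dist(K', Z(J)) \ge \delta/2$, and choose $\theta \in C_c^\infty(\R)$ with $\theta \equiv 1$ on $K$ and $\supp \theta \subeq K'$. For each $x \in K'$ one has $x \notin Z(J)$, hence there is $\psi \in J$ with $\psi(x) \neq 0$; then $|\psi|^2 = \psi\overline{\psi} \in J$ (as $J$ is an ideal and $\overline{\psi} \in \cS(\R)$), it is nonnegative, and strictly positive near $x$. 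By compactness of $K'$, finitely many such functions sum to an $h \in J$ with $h > 0$ on $K'$. Finally $\varphi := \theta g / h$ is smooth on the open set $\{h > 0\} \supeq \supp \theta$ and supported inside the compact set $\supp\theta$, hence extends by $0$ to an element of $C_c^\infty(\R) \subeq \cS(\R)$; since $\theta \equiv 1$ on $\supp g$ we get $\varphi h = \theta g = g$, and therefore $g = \varphi h \in J$ because $h \in J$.

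I expect the only real difficulty to lie in part (b), and within it in the "division" step: producing an $h \in J$ that is pointwise nonzero on a full neighbourhood of $\supp g$ and then dividing $g$ by it inside $\cS(\R)$. This is exactly where the reduction to compactly supported $g$ is essential — it forces $\dist(\supp g, Z(J)) > 0$ (two disjoint closed subsets of $\R$ can otherwise be at distance zero), which makes the quotient $\theta g/h$ a legitimate Schwartz function. The remaining ingredients — the Fourier dictionary, the bump‑function argument for (a), and the Leibniz estimate for the cutoff approximation — are routine.
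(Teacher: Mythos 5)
Your argument is correct and follows essentially the same route as the paper's proof: pass to the Fourier side where the ideal acts by pointwise multiplication, reduce to compactly supported $\hat f$ by a density/cutoff argument, build a strictly positive element of the ideal on a compact neighbourhood of the support as a finite sum $\sum_j |\psi_j|^2$, and divide using a smooth Urysohn cutoff. The only differences are cosmetic (you spell out the cutoff convergence $\chi_n g \to g$ in $\cS(\R)$, which the paper merely asserts as density of $I_c$ in $I_0$, and you write the division as $g = (\theta g/h)\cdot h$ rather than $f = f\cdot\bigl(\tfrac{h}{g}\bigr)g$).
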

\begin{proof}
(a) We obviously have $E\subseteq h(I_0(E))$. For $y\in\R\backslash E$ we find a compactly supported smooth 
function $f$ which is non-zero at $y$ and supported in a compact neighborhood of $y$ 
intersecting $E$ trivially. Then $f = \hat h$ with $h \in I_0(E)$ shows that $y\notin h(I_0(E))$, and thus $h(I_0(E))=E$.\newline
(b) For $F\subseteq \R$ closed set
\[
I_c(F):=\{f\in\cS(\R)\ :\ \supp(\widehat{f})\text{ is compact and }\supp(\widehat{f})\cap F=\eset\}.
\]
Note that $I_c(F) \subseteq I_0(F)$ is dense with respect to the Fr\'{e}chet topology of $\cS(\R)$. Thus it suffices to show that $I_c(h(I)) \subseteq I$ for every ideal $I\subseteq \cS(\R)$. We consider the Fourier 
transformed ideal $\widehat I:=\{\widehat{f}\, :\, f\in I\}$ in $\cS(\R)$ with pointwise multiplication. 
Let $f\in \cS(\R)$ be a compactly supported function which vanishes on a neighborhood of $h(I)$. We must show 
that $f \in \widehat I$. 
Choose a compact neighborhood $K$ of $\supp(f)$ which is also disjoint from $h(I)$. 
Since $h(I)\cap K=\eset$,
for every $p\in K$ there is an $f_p\in \widehat I$ with $f_p(p) \neq  0$. 
A standard compactness argument yields $f_{p_1},\ldots,f_{p_k} \in \widehat I$ such that the sets
$\{t\in\R\ :\ f_{p_j}(t)\neq 0\}$, $1\leq j\leq k$,  cover $K$. 
Set $g:= |f_{p_1}|^2+\cdots+|f_{p_k}|^2$ and note that $g\in \widehat I$. Furthermore,  $g(t)>0$  for every 
$t\in K$. 
Now by the smooth Urysohn Lemma we can choose a compactly supported smooth function $h:\R\to\R$ such that $\supp(h)\subseteq \mathrm{int}(K)$ and $h\big|_{\supp(f)}=1$. Then $g_2 :=(\frac{h}{g}){g} 
\in \widehat I$ and $g_2\big|_{\supp(f)} = 1$. Thus $f=fg_2 \in \widehat I$.
\end{proof}

\begin{prop}\label{VEequprop}
For each subset $E\subseteq \R$, we have 
\begin{equation}\label{VEequ}
V(E;\cS) = \{ v\in V : \text{ $\alpha_f(v)=0$ for all $f\in \cS(\R)$ with $\supp(\hat f) \cap \oline E = \emptyset$} \}.
\end{equation}
\end{prop}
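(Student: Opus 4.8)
The plan is to prove the set equality in \eqref{VEequ} by double inclusion, reducing everything to the corresponding statements for the equicontinuous case in \cite[App.~A.2]{Ne13} together with the hull--kernel dictionary from Lemma~\ref{idealhulllemma}. Write $J_v := \{f \in \cS(\R) : \alpha_f(v) = 0\}$ for the annihilator ideal of $v$; since $\alpha$ is polynomially bounded, $\alpha_f$ exists for $f \in \cS(\R)$ and $J_v$ is an ideal of the convolution algebra $(\cS(\R), *)$. By definition, $\Spec_\alpha(v;\cS) = h(J_v)$, the hull of $\widehat{J_v}$, so $v \in V(E;\cS)$ means exactly $h(J_v) \subeq \oline E$. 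On the other hand, the right-hand side of \eqref{VEequ} says precisely $I_0(\oline E) \subeq J_v$ (using that $\supp(\hat f) \cap \oline E = \emptyset$ iff $\supp(\hat f) \cap E = \emptyset$, as $\supp(\hat f)$ is closed). So the proposition is equivalent to the purely algebraic assertion: $h(J_v) \subeq \oline E \iff I_0(\oline E) \subeq J_v$.

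For the direction ``$\Leftarrow$'': if $I_0(\oline E) \subeq J_v$, then taking hulls reverses inclusions, so $h(J_v) \subeq h(I_0(\oline E)) = \oline E$ by Lemma~\ref{idealhulllemma}(a) (applicable since $\oline E$ is closed). For ``$\Rightarrow$'': suppose $h(J_v) \subeq \oline E$. First I would like to invoke Lemma~\ref{idealhulllemma}(b) with $I := J_v$, which gives $I_0(h(J_v)) \subeq J_v$, provided $J_v$ is a \emph{closed} ideal of $\cS(\R)$. Granting that, since $h(J_v) \subeq \oline E$ we get $I_0(\oline E) \subeq I_0(h(J_v)) \subeq J_v$ (note $I_0$ also reverses inclusions), which is what we want.

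The main obstacle, then, is showing that $J_v$ is closed in the Fréchet topology of $\cS(\R)$. This is where polynomial boundedness enters: I would show that $f \mapsto \alpha_f(v)$ is a continuous map from $\cS(\R)$ into $V$. Concretely, for a continuous seminorm $p$ on $V$, choose (by polynomial boundedness) $N \in \N_0$ and a continuous seminorm $q$ on $V$ with $p(\alpha_t(v)) \le (1+|t|^N) q(v)$ for all $t$; actually we only need it at the fixed vector $v$, so $p(\alpha_t(v)) \le C_p (1+|t|^N)$ for a constant $C_p$. Then $p(\alpha_f(v)) \le C_p \int_\R |f(t)|(1+|t|^N)\, dt$, and the right-hand side is a continuous seminorm of $f$ in $\cS(\R)$. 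Hence $f \mapsto \alpha_f(v)$ is continuous, so $J_v$, being the preimage of $\{0\}$, is closed. A secondary routine check is that $J_v$ is genuinely an ideal for convolution, which follows from $\alpha_{f*g} = \alpha_f \alpha_g$ and the fact that $\alpha_g$ commutes with... well, more simply from $\alpha_{f*g}(v) = \alpha_f(\alpha_g(v))$ and $\alpha_{g*f}(v) = \alpha_g(\alpha_f(v))$, so $J_v$ absorbs convolution on both sides.

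One should also double-check the elementary facts used without comment: that $I_0(F)$ is an ideal (stated in the paper), that $h$ and $I_0$ are both inclusion-reversing (immediate from the definitions), and that $\supp(\hat f) \cap \oline E = \eset \iff \supp(\hat f) \cap E = \eset$ for $f \in \cS(\R)$ (because $\supp(\hat f)$ is closed, so it meets $E$ iff it meets $\oline E$). With the closedness of $J_v$ in hand, the argument is a two-line application of Lemma~\ref{idealhulllemma} in each direction, so essentially all the content of the proof is the continuity estimate above. I would present the proof in the order: (1) identify RHS of \eqref{VEequ} with the condition $I_0(\oline E) \subeq J_v$; (2) prove $J_v$ is a closed ideal via the polynomial-boundedness estimate; (3) run the hull--kernel argument in both directions using Lemma~\ref{idealhulllemma}.
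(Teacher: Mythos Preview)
Your proof is correct and follows essentially the same hull--kernel argument as the paper: both reduce the two inclusions to Lemma~\ref{idealhulllemma}(a) and (b) after observing that the annihilator ideal $J_v=\cS(\R)_v$ is closed (the paper simply asserts this, whereas you supply the continuity estimate explicitly). One minor correction: your parenthetical claim that $\supp(\hat f)\cap\oline E=\eset$ iff $\supp(\hat f)\cap E=\eset$ is false in general (e.g.\ $E=(0,1)$ and $\supp(\hat f)\subeq[-1,0]$ with $0\in\supp(\hat f)$), but it is also unnecessary---the right-hand side of \eqref{VEequ} is literally the condition $I_0(\oline E)\subeq J_v$ by the definition of $I_0$, so simply drop the parenthetical.
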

\begin{proof}
Assume without loss of generality that $E\subseteq \R$ is closed. For $v\in V$ we denote by $\cS(\R)_v=\{f\in \cS(\R) : \alpha_f(v)=0\}$ the annihilator ideal of $v$, which is closed in $\cS(\R)$. Note that $\Spec_\alpha(v;\cS)=h(\cS(\R)_v)$ and that the right hand side of
\eqref{VEequ} equals 
\[ M:=\{ v\in V: I_0(E) \subseteq \cS(\R)_v\}.\]
For $v\in V(E;\cS)$, we have $h(\cS(\R)_v)\subseteq E$ and therefore $I_0(E)\subseteq I_0(h(\cS(\R)_v))\subseteq \cS(\R)_v$ by Lemma \ref{idealhulllemma}(b), which implies $v\in M$. For $v\in M$, 
we have $\Spec_\alpha(v;\cS)=h(\cS(\R)_v)\subseteq h(I_0(E))=E$ by Lemma \ref{idealhulllemma}(a), so that $v\in V(E;\cS)$. Hence $V(E;\cS)=M$.
\end{proof}

The preceding proposition shows in particular that $V(E;\cS)$ is a closed subspace of $V$. 
The following lemma shows that $\Spec_\alpha(v;\cS)$, respectively, $V(E;\cS)$ 
are natural generalizations of $\Spec_\alpha(v)$, respectively, 
the Arveson spectral subspace $V(E)$ to the case when $\alpha$ is polynomially bounded.

\begin{lem}\label{lemequalspectra}
Assume that $\alpha$ is equicontinuous. Then $V(E)=V(E;\cS)$ for all $E\subseteq \R$ and $\Spec_\alpha(v)=\Spec_\alpha(v;\cS)$ for all $v\in V$.
\end{lem}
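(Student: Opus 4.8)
The plan is to reduce everything to the single statement $\Spec_\alpha(v)=\Spec_\alpha(v;\cS)$ for every $v\in V$: once this is known, $V(E)=V(E;\cS)$ for all $E\subseteq\R$ is immediate, since by Definition~\ref{arvspecdef} both subspaces consist precisely of those $v$ whose (respective) spectrum is contained in $\oline E$, and these spectra agree. For the spectra, one inclusion is purely formal. Since $\cS(\R)\subseteq L^1(\R)$ and, for $f\in\cS(\R)$, the operator $\alpha_f$ of Definition~\ref{arvspecdef}(a) coincides with the one used in the equicontinuous $L^1$-theory, the annihilator ideals satisfy $\cS(\R)_v=\cS(\R)\cap L^1(\R)_v$. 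Hence, if $y\in\Spec_\alpha(v)$ and $f\in\cS(\R)$ satisfies $\alpha_f v=0$, then $f\in L^1(\R)_v$ and therefore $\hat f(y)=0$; this gives $\Spec_\alpha(v)\subseteq\Spec_\alpha(v;\cS)$.

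For the reverse inclusion I would argue contrapositively. Fix $y\notin\Spec_\alpha(v)$. Since $\Spec_\alpha(v)=h(L^1(\R)_v)$ is closed, $U:=\R\setminus\Spec_\alpha(v)$ is an open neighbourhood of $y$, and because the Fourier transform maps $\cS(\R)$ onto $\cS(\R)\supseteq C_c^\infty(U)$, we may choose $f\in\cS(\R)$ with $\hat f\in C_c^\infty(U)$ and $\hat f(y)\neq 0$. Then $\supp(\hat f)$ is a compact subset of $U$, hence disjoint from $h(L^1(\R)_v)$, so the $L^1$-analogue of Lemma~\ref{idealhulllemma}(b) (a result of \cite[App.~A.2]{Ne13}, whose proof runs exactly as that of Lemma~\ref{idealhulllemma}, using the regularity of $L^1(\R)$ and the density in $L^1(\R)$ of the functions with compactly supported Fourier transform) forces $f\in L^1(\R)_v$. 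Since $f$ is a Schwartz function, $f\in\cS(\R)_v$, and as $\hat f(y)\neq 0$ we conclude $y\notin\Spec_\alpha(v;\cS)$. Combining the two inclusions gives $\Spec_\alpha(v)=\Spec_\alpha(v;\cS)$ for all $v$, and hence $V(E)=V(E;\cS)$ for all $E\subseteq\R$.

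I expect the only non-routine point to be this reverse inclusion, whose real content is the passage from ``$y$ lies outside the hull of the $L^1$-annihilator ideal $L^1(\R)_v$'' to ``there is a genuinely \emph{Schwartz} element of $L^1(\R)_v$ that does not vanish at $y$''. One cannot simply convolve an arbitrary element of $L^1(\R)_v$ with a Schwartz function in order to land in $\cS(\R)$ — such a convolution is smooth but need not be rapidly decreasing — so instead one must insert a Schwartz bump whose Fourier transform is compactly supported inside $\R\setminus\Spec_\alpha(v)$ and invoke that functions with spectrally localized Fourier transform automatically annihilate $v$. If one prefers to keep the argument internal to the paper, the needed ingredient is precisely the $L^1$-version of Lemma~\ref{idealhulllemma}, proved verbatim; alternatively the same step can be routed through the $L^1$-analogue of Proposition~\ref{VEequprop}.
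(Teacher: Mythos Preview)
Your argument is correct and uses the same underlying ingredient as the paper—namely the $L^1$-analogue of Proposition~\ref{VEequprop} (equivalently, the $L^1$-version of Lemma~\ref{idealhulllemma}(b)), which in the paper is quoted as \cite[Rem.~A.6]{Ne13}. The only difference is the order of the two conclusions: the paper first establishes $V(E)=V(E;\cS)$ by comparing the two annihilator characterizations (since $\cS(\R)\subeq L^1(\R)$, the $L^1$-condition in \cite[Rem.~A.6]{Ne13} is formally stronger than the $\cS$-condition in Proposition~\ref{VEequprop}, giving $V(E)\subeq V(E;\cS)$ directly), and then reads off $\Spec_\alpha(v)=\Spec_\alpha(v;\cS)$ by taking $E=\Spec_\alpha(v)$. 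You instead prove the spectral equality first via an explicit Schwartz bump with compactly supported Fourier transform, and then deduce the equality of spectral subspaces. Both routes are short; the paper's ordering avoids writing down the bump explicitly, while yours makes the mechanism behind the reverse inclusion more visible.
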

\begin{proof}
Let $E\subseteq \R$ and assume without loss of generality that $E$ is closed.
We have $V(E;\cS) \subseteq V(E)$ as $\Spec_\alpha(v) \subseteq \Spec_\alpha(v;\cS)$ and 
$$V(E) = \{ v\in V : \text{ $\alpha_f(v)=0$ for all $f\in L^1(\R)$ with $\supp(\hat f) \cap E = \emptyset$} \} $$
by \cite[Rem.~A.6]{Ne13}. With Proposition \ref{VEequprop} we thus obtain $V(E)\subseteq V(E;\cS)$ and conclude that $V(E)=V(E;\cS)$. Let $v\in V$ and $F:=\Spec_\alpha(v)$. 
Since $F$ is closed, this implies $v\in V(F) = V(F;\cS)$, so that 
$\Spec_\alpha(v;\cS) \subeq F$ yields $F = \Spec_\alpha(v;\cS)$. 
\end{proof}

The following proposition is a version of \cite[Prop.~A.14]{Ne13} for polynomially bounded representations of $\R$.

\begin{prop}\label{spectranslformgen}
Assume that $(\alpha_j, V_j)$, j = 1,2,3 are continuous polynomially bounded
representations of $\R$ on the complete complex locally convex
spaces $V_j$ and that $\beta : V_1 \times V_2 \rightarrow V_3$ is a continuous equivariant bilinear map. Then
we have, for closed subsets $E_1, E_2 \subseteq \R$, the relation
$$\beta(V_1(E_1;\cS)\times V_2(E_2;\cS)) \subseteq V_3(E_1+E_2;\cS).$$
\end{prop}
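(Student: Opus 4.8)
The plan is to reduce the claim, via Proposition~\ref{VEequprop}, to the vanishing of certain $\cS(\R)$--smoothed vectors, and then to establish this by a two--variable mollification in which $v_1$ and $v_2$ are localised separately on the Fourier side. For the reduction, fix $v_j\in V_j(E_j;\cS)$ and put $w:=\beta(v_1,v_2)$. Since $V_j(E_j;\cS)$ depends only on $\oline{E_j}$ and $\oline{E_1}+\oline{E_2}\subseteq\oline{E_1+E_2}$, we may assume $E_1,E_2$ are closed. By Proposition~\ref{VEequprop} it then suffices to show $(\alpha_3)_f w=0$ for every $f\in\cS(\R)$ with $\supp(\hat f)\cap\oline{E_1+E_2}=\eset$. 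Now $V_3(E_1+E_2;\cS)$ is closed (Proposition~\ref{VEequprop}) and the linear map $\cS(\R)\to V_3,\ f\mapsto(\alpha_3)_f w$ is continuous: for a continuous seminorm $p$ on $V_3$, polynomial boundedness of $\alpha_3$ yields $N\in\N_0$ and a continuous seminorm $r$ with $p((\alpha_3)_f w)\le r(w)\int_\R|f(t)|(1+|t|^N)\,dt$. Since the $f$ as above with $\supp(\hat f)$ in addition \emph{compact} are dense in $I_0(\oline{E_1+E_2})$ (cf.\ the proof of Lemma~\ref{idealhulllemma}(b)), we may moreover assume $\supp(\hat f)$ compact, so that $3\delta:=\dist\big(\supp(\hat f),\oline{E_1+E_2}\big)>0$.

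\emph{The two--variable integral.} For $F\in\cS(\R^2)$ set
\[ T(F):=\int_{\R^2}F(s,t)\,\beta(\alpha_{1,s}v_1,\alpha_{2,t}v_2)\,ds\,dt\in V_3,\]
which exists because $V_3$ is complete, $\beta$ is continuous and $\alpha_1,\alpha_2$ are polynomially bounded (so the integrand is dominated by a constant times $|F(s,t)|(1+|s|^{N_1})(1+|t|^{N_2})$ for suitable $N_1,N_2$). Fubini together with the continuity and bilinearity of $\beta$ gives
\[ T(F)=\int_\R\beta\big((\alpha_1)_{F(\cdot,t)}v_1,\alpha_{2,t}v_2\big)\,dt=\int_\R\beta\big(\alpha_{1,s}v_1,(\alpha_2)_{F(s,\cdot)}v_2\big)\,ds.\]
Choose a mollifier $\phi_\eps\ge 0$ with $\int_\R\phi_\eps=1$ and $\supp\phi_\eps\subseteq[-\eps,\eps]$, and put $F_\eps(s,t):=f(s)\phi_\eps(s-t)$. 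Using $\alpha_{3,s}w=\beta(\alpha_{1,s}v_1,\alpha_{2,s}v_2)$, the substitution $u=s-t$ and dominated convergence (an integrable majorant coming from the polynomial bounds and $f\in\cS(\R)$), one obtains $T(F_\eps)\to(\alpha_3)_f w$ as $\eps\to 0$. A direct computation gives $\widehat{F_\eps}(\xi,\eta)=\hat f(\xi+\eta)\,\hat\phi_\eps(-\eta)$, hence $\supp(\widehat{F_\eps})\subseteq\{(\xi,\eta):\xi+\eta\in\supp(\hat f)\}$. With the closed $\delta$--neighbourhoods $A_j:=\{x\in\R:\dist(x,E_j)\le\delta\}$, this set --- and therefore $\supp(\widehat{F_\eps})$ --- lies at positive distance from $A_1\times A_2$: if $\xi\in A_1$, $\eta\in A_2$ and $\xi+\eta\in\supp(\hat f)$, pick $x\in E_1$, $y\in E_2$ with $|\xi-x|\le\delta$, $|\eta-y|\le\delta$; then $x+y\in E_1+E_2$ and $|(\xi+\eta)-(x+y)|\le 2\delta<3\delta$, contradicting the choice of $\delta$.

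\emph{Splitting and conclusion.} The open sets $V_1:=\{(\xi,\eta):\dist(\xi,E_1)>\delta/2\}$ and $V_2:=\{(\xi,\eta):\dist(\eta,E_2)>\delta/2\}$ cover $\supp(\widehat{F_\eps})$, since a point of the latter cannot lie in $A_1\times A_2$. As $V_1$ constrains only $\xi$ and $V_2$ only $\eta$, one can choose $\chi_1,\chi_2\in C^\infty(\R^2)$ with bounded derivatives of all orders, $\supp\chi_i\subseteq V_i$, and $\chi_1+\chi_2\equiv 1$ on a neighbourhood of $\supp(\widehat{F_\eps})$; let $G_i\in\cS(\R^2)$ be the function with $\widehat{G_i}=\chi_i\widehat{F_\eps}$, so that $F_\eps=G_1+G_2$. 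Then $\widehat{G_1}$ vanishes on $\{\dist(\xi,E_1)\le\delta/2\}\times\R$, so by Fourier inversion in the second variable $\widehat{G_1(\cdot,t)}$ vanishes on the open neighbourhood $\{\dist(\xi,E_1)<\delta/2\}$ of $E_1$; hence $\supp\big(\widehat{G_1(\cdot,t)}\big)\cap E_1=\eset$ and $(\alpha_1)_{G_1(\cdot,t)}v_1=0$ by Proposition~\ref{VEequprop}. The first formula for $T$ applied to $G_1$ gives $T(G_1)=0$, and symmetrically $(\alpha_2)_{G_2(s,\cdot)}v_2=0$ gives $T(G_2)=0$. Thus $T(F_\eps)=0$ for every $\eps$, and letting $\eps\to 0$ yields $(\alpha_3)_f w=0$, as required. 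The main obstacle is this last part: the bookkeeping of two--variable Fourier supports and the splitting $F_\eps=G_1+G_2$ within $\cS(\R^2)$, where the compactness of $\supp(\hat f)$ (i.e.\ $\delta>0$) is indispensable, as it is what lets $E_1$ and $E_2$ be thickened to closed sets whose interiors still avoid the relevant part of $\supp(\widehat{F_\eps})$; the vector--valued integration, Fubini and dominated--convergence steps are routine.
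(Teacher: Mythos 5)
Your argument is correct and follows essentially the same route as the paper: reduce via Proposition \ref{VEequprop} to showing $(\alpha_3)_f\beta(v_1,v_2)=0$ for $f$ with $\supp(\hat f)$ disjoint from $\oline{E_1+E_2}$, and then run the Arveson-type two-variable Fourier regularization --- which is precisely the argument the paper delegates to \cite[Prop.~A.14]{Ne13} and \cite[Prop.~2.2]{Ar74} with the remark that it carries over to the polynomially bounded case. The only difference is that you write this argument out in full (including the reduction to compact Fourier support and the splitting $F_\eps=G_1+G_2$), whereas the paper's proof is a two-line citation.
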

\begin{proof}
By Proposition \ref{VEequprop} the assertion can be proved (with trivial changes) as in \cite[Prop.~A.14]{Ne13} once we know that \cite[Lem.~A.13]{Ne13} (or equivalently \cite[Prop.~2.2]{Ar74}) holds also in the polynomially bounded case. With Proposition \ref{VEequprop} and Lemma \ref{idealhulllemma}, 
the proof of \cite[Prop.~2.2]{Ar74} carries over to the polynomially bounded case.
\end{proof}

\noindent\textbf{Acknowledgements.} The authors thank the referee for the most careful reading of the manuscript, which led to numerous improvements in the presentation of the article. During the completion of this project, Hadi Salmasian was supported by an NSERC Discovery Grant. Karl--Hermann Neeb and
Christoph Zellner acknowledge the support of
DFG-grant NE 413/7-2 in the framework
of SPP ``Representation Theory''.

\end{document}